\documentclass[12pt]{article}

\usepackage{geometry}   % page dimensions are set in oldstyle.tex

\usepackage[dvipsnames,table]{xcolor}
\definecolor{c0}{HTML}{ffb000}
\definecolor{c1}{HTML}{fe6100}
\definecolor{c2}{HTML}{dc267f}
\definecolor{c3}{HTML}{785ef0}
\definecolor{c4}{HTML}{648fff}

\usepackage{amsmath,amssymb,amsthm,mathtools}
\usepackage{graphicx}
\usepackage[labelfont=bf]{caption}
\usepackage{enumitem}
\usepackage{algorithm}
\usepackage{algpseudocode}

\usepackage{microtype}
\usepackage{hyperref}

\usepackage[capitalize,nameinlink,noabbrev]{cleveref}
\crefformat{equation}{#2(#1)#3}
\crefmultiformat{equation}{#2(#1)#3}%
{ and~#2(#1)#3}{, #2(#1)#3}{ and~#2(#1)#3}
\crefrangeformat{equation}{#3(#1)#4--#5(#2)#6}
\crefname{algorithm}{Algorithm}{Algorithms}

\newtheorem{theorem}{Theorem}[section]
\newtheorem{lemma}[theorem]{Lemma}
\newtheorem{proposition}[theorem]{Proposition}
\newtheorem{corollary}[theorem]{Corollary}
\newtheorem{assumption}[theorem]{Assumption}
\theoremstyle{definition}
\newtheorem{definition}[theorem]{Definition}
\theoremstyle{remark}
\newtheorem{remark}[theorem]{Remark}

\newcommand{\R}{\mathbb R}
\newcommand{\norm}[1]{\lVert#1\rVert}
\newcommand{\abs}[1]{\lvert#1\rvert}
\newcommand{\fl}{\operatorname{fl}}
\newcommand{\diag}{\operatorname{diag}}
\newcommand{\up}{\operatorname{up}}
\newcommand{\spec}{\operatorname{spec}}
\newcommand{\dist}{\operatorname{dist}}
\newcommand{\range}{\operatorname{range}}
\newcommand{\col}{\operatorname{col}}
\newcommand{\eps}{\varepsilon}

\newcommand{\T}{\mathsf{T}}
\renewcommand{\vec}[1]{#1}

\usepackage[backend=biber,backref,style=alphabetic,sorting=none,maxcitenames=99,maxbibnames=99,giveninits=true,url=false,doi=false,isbn=false,eprint=false,date=year]{biblatex}
\DeclareSourcemap{
  \maps[datatype=bibtex, overwrite]{
    \map{
      \step[fieldset=editor, null]
    }
  }
}
\DefineBibliographyStrings{english}{%
  backrefpage = {cited on page},%
  backrefpages = {cited on pages}%
}

\usepackage{etoolbox}

\usepackage[T1]{fontenc}
\usepackage[helvratio=0.94]{newtxtext}
\usepackage[bigdelims,vvarbb]{newtxmath}
\usepackage[cal=boondoxo]{mathalfa}
\newcommand{\headfont}{\sffamily\bfseries}

\renewcommand{\vec}[1]{\mathbf{#1}}

\providecommand{\textls}[2][]{#2}
\newcommand{\lsp}[1]{\textls[90]{#1}}

\usepackage[export]{adjustbox}
\setkeys{Gin}{max width=0.95\linewidth}

\selectfont
\usepackage{fancyhdr}
\newcommand{\laarunningtitle}{}
\newcommand{\laarunningauthor}{}
\newcommand{\setrunningheads}[2]{%
  \renewcommand{\laarunningtitle}{#1}%
  \renewcommand{\laarunningauthor}{#2}%
}
\newcommand{\laahead}[1]{\sffamily\fontsize{8}{10}\selectfont\lsp{\MakeUppercase{#1}}}
\fancypagestyle{laa}{%
  \fancyhf{}%
  \fancyhead[L]{%
    \ifodd\value{page}\else
      {\headfont\fontsize{8}{10}\selectfont\thepage}\hspace{1.6em}%
      {\laahead{\laarunningauthor}}%
    \fi}%
  \fancyhead[R]{%
    \ifodd\value{page}%
      {\laahead{\laarunningtitle}}\hspace{1.6em}%
      {\headfont\fontsize{8}{10}\selectfont\thepage}%
    \fi}%
}
\fancypagestyle{laafirst}{%
  \fancyhf{}%
  \fancyfoot[L]{%
    \sffamily\fontsize{8}{10}\selectfont\laajournalline}%
  \fancyfoot[R]{\headfont\fontsize{8}{10}\selectfont\thepage}%
}
\newcommand{\laajournalline}{}

\newcommand{\laasubmittedby}{}

\newcommand{\laaaffiliation}{}
\newcommand{\affiliation}[1]{\renewcommand{\laaaffiliation}{#1}}

\makeatletter
\renewcommand{\maketitle}{%
  \thispagestyle{laafirst}%
  \begingroup
  \raggedright
  {\headfont\fontsize{20}{22}\selectfont\@title\par}
  \vskip 1.8em
  {\sffamily\fontsize{11}{14}\selectfont\@author\par}
  \ifdefempty{\laaaffiliation}{}{%
    \vskip 0.3em
    {\itshape\fontsize{9.5}{12.5}\selectfont\laaaffiliation\par}}
  \ifdefempty{\laasubmittedby}{}{%
    \vskip 1.2em
    {\sffamily\fontsize{8.5}{11}\selectfont Submitted by \laasubmittedby\par}}
  \endgroup
  \vskip 1.4em
}

\renewenvironment{abstract}{%
  \vskip 0.5em
  \noindent\rule{\textwidth}{0.4pt}
  \vskip 1.1em
  \noindent{\headfont\fontsize{8}{10}\selectfont\lsp{ABSTRACT}}\par
  \vskip 0.8em
  \small\noindent\ignorespaces
}{%
  \par\vskip 1.1em
  \noindent\rule{\textwidth}{0.4pt}
  \vskip 1.6em
}
\makeatother

\usepackage{titlesec}
\titleformat{\section}[block]
  {\normalfont\bfseries}
  {\makebox[2.1em][l]{\thesection.}}
  {0pt}
  {}
\titlespacing*{\section}{0pt}{2.4\baselineskip}{0.9\baselineskip}

\titleformat{\subsection}[block]
  {\normalfont\itshape}
  {\makebox[2.8em][l]{\thesubsection.}}
  {0pt}
  {}
\titlespacing*{\subsection}{0pt}{1.7\baselineskip}{0.6\baselineskip}

\apptocmd{\appendix}{%
  \titleformat{\section}[block]
    {\normalfont\bfseries}
    {\appendixname\ \thesection.}{0.5em}{}%
  \titlespacing*{\section}{0pt}{2.4\baselineskip}{0.9\baselineskip}%
}{}{}

\makeatletter
\newtheoremstyle{laaplain}
  {1.1\baselineskip plus 4pt minus 2pt}%     % space above
  {1.1\baselineskip plus 4pt minus 2pt}%     % space below
  {\itshape}                             % body font
  {0pt}                                  % indent
  {\normalfont\bfseries}                 % head font
  {.}                                    % punctuation after head
  {0.6em}                                % space after head
  {\thmname{#1}\thmnumber{ #2}\thmnote{ \normalfont(#3)}}
\newtheoremstyle{laaremark}
  {1.1\baselineskip plus 4pt minus 2pt}%
  {1.1\baselineskip plus 4pt minus 2pt}%
  {\normalfont}
  {0pt}
  {\normalfont\bfseries}
  {.}
  {0.6em}
  {\thmname{#1}\thmnumber{ #2}\thmnote{ \normalfont(#3)}}
\makeatother

\theoremstyle{laaplain}
\newtheorem{laatheorem}{Theorem}[section]
\newtheorem{laalemma}[laatheorem]{Lemma}
\newtheorem{laaproposition}[laatheorem]{Proposition}
\newtheorem{laacorollary}[laatheorem]{Corollary}
\newtheorem{laaassumption}[laatheorem]{Assumption}
\theoremstyle{laaremark}
\newtheorem{laadefinition}[laatheorem]{Definition}
\newtheorem{laaremark}[laatheorem]{Remark}

\makeatletter
\newcommand{\laakeeptogether}[1]{%
  \AtBeginEnvironment{#1}{%
    \interlinepenalty=9999 \postdisplaypenalty=9999
    \@itempenalty=9999 }}
\makeatother
\laakeeptogether{theorem}     \laakeeptogether{lemma}
\laakeeptogether{proposition} \laakeeptogether{corollary}
\laakeeptogether{assumption}  \laakeeptogether{definition}
\laakeeptogether{remark}

\makeatletter
\renewenvironment{proof}[1][\proofname]{%
  \par\pushQED{\qed}%
  \normalfont\topsep6\p@\@plus6\p@\relax
  \trivlist\item[\hskip\labelsep\itshape#1.]\ignorespaces
}{%
  \popQED\endtrivlist\@endpefalse
}
\makeatother
\renewcommand{\proofname}{Proof}

\numberwithin{equation}{section}

\makeatletter
\g@addto@macro\normalsize{%
  \setlength{\abovedisplayskip}{9pt plus 6pt minus 3pt}%
  \setlength{\belowdisplayskip}{9pt plus 6pt minus 3pt}%
  \setlength{\abovedisplayshortskip}{4pt plus 2pt}%
  \setlength{\belowdisplayshortskip}{5pt plus 2pt minus 2pt}%
}
\makeatother

\crefname{laatheorem}{Theorem}{Theorems}
\Crefname{laatheorem}{Theorem}{Theorems}
\crefname{laalemma}{Lemma}{Lemmas}
\Crefname{laalemma}{Lemma}{Lemmas}
\crefname{laacorollary}{Corollary}{Corollaries}
\Crefname{laacorollary}{Corollary}{Corollaries}
\crefname{laaproposition}{Proposition}{Propositions}
\Crefname{laaproposition}{Proposition}{Propositions}
\crefname{laaassumption}{Assumption}{Assumptions}
\Crefname{laaassumption}{Assumption}{Assumptions}
\crefname{laadefinition}{Definition}{Definitions}
\Crefname{laadefinition}{Definition}{Definitions}
\crefname{laaremark}{Remark}{Remarks}
\Crefname{laaremark}{Remark}{Remarks}

\hypersetup{colorlinks=true, linkcolor=black, citecolor=black,
            filecolor=black, urlcolor=black}

\defbibheading{bibliography}[References]{%
  \section*{#1}}

\setrunningheads{Lanczos in finite precision}{T. Chen}
\affiliation{New York University Shanghai\\567 West Yangsi Road\\Shanghai, 200126, P.R. China}

\begin{document}

% Title, abstract, and body of the paper.  This file is shared by the two
% presentations: finite_precision_lanczos.tex (the default article style) and
% finite_precision_lanczos_laa.tex (an imitation of the late-1980s Linear
% Algebra and its Applications style).  It contains no preamble and no
% \begin{document}; each wrapper supplies those.

\title{A simple stability analysis of the Lanczos algorithm in finite precision arithmetic}
\author{Tyler Chen}
\date{}

\maketitle

\begin{abstract}
We give a self-contained finite-precision analysis of the symmetric Lanczos algorithm without reorthogonalization.
In particular, we derive the perturbed three-term recurrence, Paige's loss-of-orthogonality identity, containment of all computed Ritz values, and localization of stabilized Ritz values.
We then prove a Greenbaum-type backward stability result, exhibiting a nearby problem on which exact Lanczos produces the computed tridiagonal matrix.
Our proofs simplify those of Paige and Greenbaum, at the cost of hiding polynomial factors in the iteration count.
\end{abstract}

\section{Introduction}

Let \(\vec A=\vec A^\T\in\R^{n\times n}\) and let $\vec{v}\in\R^n$ be nonzero.
In exact arithmetic, \(k\) non-terminating steps of the Lanczos algorithm (\cref{alg:lanczos}) applied to \((\vec A,\vec v)\) produce a matrix \(\vec V_k = [\vec{v}_1, \ldots, \vec{v}_k]\) with orthonormal columns, \(\vec v_1=\vec v/\norm{\vec v}\), and a symmetric tridiagonal matrix \(\vec T_k\in\R^{k\times k}\) satisfying
\[
 \vec A\vec V_k=\vec V_k\vec T_k+\beta_k\vec v_{k+1}\vec e_k^\T.
\]
Hence, for $j\leq k+1$, the vectors $\{\vec{v}_1, \ldots, \vec{v}_{j}\}$ form an orthonormal basis for the Krylov subspace
\[
\operatorname{span}\{\vec{v},\vec{A}\vec{v}, \ldots, \vec{A}^{j-1}\vec{v}\}.
\]
The matrices $\vec{V}_k$ and $\vec{T}_k$ can then be used for downstream tasks such as estimating eigenvalues, solving linear systems of equations, and problems involving matrix functions \cite{meurant2006,greenbaum1997,tropp_webber2023,chen2024}.

It was known since the introduction of the Lanczos algorithm that the behavior of the algorithm in finite precision arithmetic can be drastically different \cite{lanczos1950}.
Indeed, the columns of \(\vec V_k\) need not remain mutually orthogonal (and in fact can eventually become nearly linearly dependent), and the computed tridiagonal matrix may differ greatly from what would have been produced in exact arithmetic.
\Cref{fig:backward-experiment} illustrates this on a small example; the same run is revisited in \cref{sec:numerical}, where the backward model is built for it explicitly.

\begin{figure}[t]
\centering
\includegraphics[scale=.86]{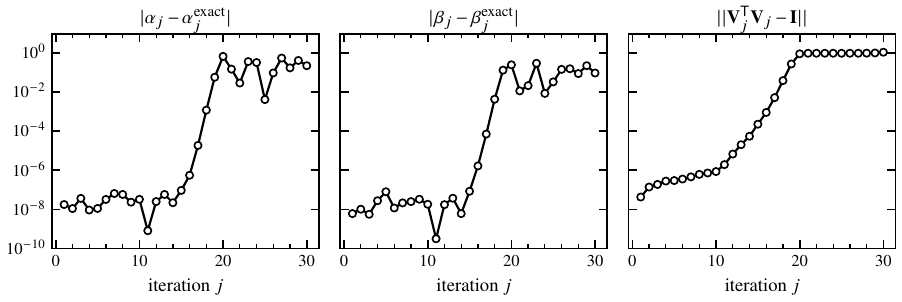}
\caption{\Cref{alg:lanczos} run for \(k=30\) iterations on a \(64\times64\) Strako\v{s} matrix \(\vec A\), in IEEE single precision, for which the unit roundoff is \(\approx10^{-7}\); see \cref{sec:numerical} for details.
Left and center: stepwise deviation of the computed coefficients \(\alpha_j\) and \(\beta_j\) from the coefficients \(\alpha_j^{\textup{exact}}\) and \(\beta_j^{\textup{exact}}\) produced by exact Lanczos (double precision with full reorthogonalization) applied to \(\vec A\).
Right: loss of orthogonality of the computed Lanczos vectors.
The deviations grow in lockstep with the loss of orthogonality: once orthogonality is gone, the computed run no longer tracks exact Lanczos on \(\vec A\), and the deviations are many orders of magnitude larger than the single-precision unit roundoff.}
\label{fig:backward-experiment}
\end{figure}

Remarkably, this instability does not render the algorithm useless.
Lanczos without reorthogonalization is used widely and successfully: Ritz values continue to converge to eigenvalues of \(\vec A\) (although possibly with spurious copies), and approximations built from the computed \(\vec T_k\), such as those to quadratic forms and matrix functions, are often nearly as accurate as their exact-arithmetic counterparts.
Explaining this apparent paradox has occupied numerical analysts for decades.
Much of our current understanding of the algorithm's behavior in finite precision arithmetic is due to Chris Paige and Anne Greenbaum.

Paige  showed that the algorithm is well-behaved locally, even in the presence of rounding errors \cite{paige1971,paige1972,paige1976}.
In particular, he proved that the computed Lanczos vectors nearly satisfy a symmetric three-term recurrence, are nearly unit length, and that consecutive vectors are nearly orthogonal.
He also proved that loss of orthogonality of the Lanczos vectors goes hand-in-hand with the convergence of a Ritz value (eigenvalue of $\vec{T}_k$) to an eigenvalue of $\vec{A}$ \cite{paige1980}; this result is often referred to as ``Paige's theorem''.
Finally, it is commonly attributed to Paige that every computed Ritz value lies nearly within the interval containing the eigenvalues of $\vec{A}$.

Greenbaum later proved a backward-stability-type result for Lanczos \cite{greenbaum1989}.
Building on the work of Paige, she showed that the computed tridiagonal matrix can also be obtained as the output of Lanczos run in exact arithmetic on a nearby problem instance.
The nearby problem instance is of a larger dimension than $n$, but is close in the sense that its eigenvalues lie within small intervals about the eigenvalues of $\vec{A}$, and the spectral weights of its starting vector on each such interval are close to those of the original starting vector \cite[\S8]{greenbaum1989}; see also \cite{greenbaum_strakos1992}.

Several resources exist for learning about these results.
The original papers are rigorous and complete, but they are not an easy read: the analysis is spread across multiple works, the bounds are assembled through careful elementwise rounding-error bookkeeping, and Greenbaum herself describes the theorems and proofs of \cite{greenbaum1989} as ``gory''.
The survey of Meurant and Strako\v{s} \cite{meurant_strakos2006} is an accessible guide to the results, their history, and their consequences, but it states the central theorems without proof, deferring to the original papers.
Meurant's book \cite{meurant2006} collects these results and describes many aspects of their proofs carefully, but it likewise stops short of complete proofs of some key theorems.
Parlett \cite{parlett1998} distills a proof of Paige's Theorem to its essential mechanism, but makes use of heuristic arguments to do so.
In fact, in none of these resources do we find a proof of the Ritz value containment.

The aim of this note is to present a simplified, rigorous, self-contained analysis that captures the \emph{essence} of the results of Paige and Greenbaum.
Our main tools in this quest are two \emph{design choices}:
\begin{enumerate}[label=(\roman*)]
  \item suppress polynomial dependencies on the iteration count $k$ and avoid mucking around optimizing this dependence, and
  \item assume linear-algebra kernels, rather than individual arithmetic operations, are performed accurately.
\end{enumerate}
While the resulting bounds are weaker than existing ones, we believe the tradeoff with simplicity is a good one.
First, re-deriving standard floating-point bounds for linear-algebra primitives from the standard model of floating point arithmetic sheds little light on the behavior of Lanczos itself.
Second, existing bounds for Lanczos are already numerically vacuous for even relatively small iteration counts $k$ and dimensions $n$ in double precision arithmetic, so their value is in the intuition they provide rather than as a usable bound.
Third, polynomial dependencies on $k$ do not significantly impact the theoretical number of bits of precision needed, which grows only logarithmically with the target accuracy.

\begin{definition}[Polynomial convention]
Each occurrence of 
\[
 x\lesssim_k y
\]
is shorthand for \(x\le c k^dy\), where \(c,d>0\) are fixed absolute constants attached to that instance.
The constant and exponent may change from one occurrence to the next.
\end{definition}

\begin{remark}
This notation does not hide things that shouldn't be hidden such as the dimension, spectral quantities of $\vec{A}$, etc.
In the companion Lean proof, available at \url{https://github.com/tchen-research/simple-lanczos-stability}, we provide explicit constants and exponents.
\end{remark}

The remainder of this note is organized as follows.
\Cref{sec:model} states the arithmetic model and the computed recurrence.
\Cref{sec:paige} derives our versions of Paige's results: local error bounds, the loss-of-orthogonality theorem, and containment and localization of the computed Ritz values.
\Cref{sec:greenbaum} constructs the Greenbaum-type backward model.

\begin{algorithm}[htb]
\caption{Finite-precision symmetric Lanczos without reorthogonalization}
\label{alg:lanczos}
\small
\begin{algorithmic}[1]
\Require a symmetric matrix \(\vec A\); a nonzero starting vector
  \(\vec v\); a maximum iteration count
  \(k_{\max}\).  The calls \(\fl\) and \textsc{normalize} obey the
  assumptions of \cref{sec:model}.
\State \((\,\cdot\,,\vec v_1)\gets\textsc{normalize}(\vec v)\)
  \Comment{the scalar output is discarded}
\State \(\vec v_0\gets\vec0\), \(\beta_0\gets0\)
\For{\(j=1,2,\ldots,k_{\max}\)}
  \State \(\vec u_j\gets\fl(\vec A\vec v_j)\)
  \State \(\vec w_j\gets\fl(\vec u_j-\beta_{j-1}\vec v_{j-1})\)
  \State \(\alpha_j\gets\fl(\vec v_j^\T\vec w_j)\)
  \State \(\vec z_j\gets\fl(\vec w_j-\alpha_j\vec v_j)\)
  \If{\(\vec z_j=\vec0\)}
    \State \(\beta_j\gets0\); \textbf{terminate}
  \Else
    \State \((\beta_j,\vec v_{j+1})\gets\textsc{normalize}(\vec z_j)\)
  \EndIf
\EndFor
\Ensure 
  \(\vec V_k=[\vec v_1,\ldots,\vec v_k]\) and the tridiagonal \(\vec T_k\)
  formed from \(\alpha_1,\ldots,\alpha_k\) and
  \(\beta_1,\ldots,\beta_{k-1}\).
\end{algorithmic}
\end{algorithm}

\section{Setup}
\label{sec:model}

\subsection{Arithmetic and normalization}

We assume linear-algebra kernels can be performed to relative accuracy quantified by the accuracy parameter $\varepsilon$.
The precise form of our assumptions is based on the bounds one obtains from the standard model of floating point arithmetic \cite{higham2002}. 
As such, these assumptions can be ensured by setting the machine precision sufficiently small relative to the ambient dimension $n$.

\begin{assumption}[Arithmetic kernels]
For compatible vectors and scalars, the computed kernels satisfy
\begin{align*}
 \fl(\vec x^\T\vec y)
 &=\vec x^\T\vec y+e,
 &\abs e&\le\eps\norm{\vec x}\norm{\vec y},
 \\
 \fl(\vec x\pm\alpha\vec y)
 &=\vec x\pm\alpha\vec y+\vec e,
 &\norm{\vec e}&\le\eps
   \bigl(\norm{\vec x}+\abs\alpha\norm{\vec y}\bigr),
 \\
 \fl(\vec A\vec v)
 &=\vec A\vec v+\vec e,
 &\norm{\vec e}&\le\eps\norm{\vec A}\norm{\vec v}.
\end{align*}
Copying and transposition are exact.
\end{assumption}

\begin{assumption}[Normalization]
\label{ass:normalization}
For \(\vec y\ne\vec0\), the call \((\beta,\vec v)=\textnormal{\textsc{normalize}}(\vec y)\) returns \(\beta>0\) and \(\vec v\) such that
\begin{align*}
 \beta\vec v&=\vec y+\vec\Delta,
 &\norm{\vec\Delta}&\le\eps\norm{\vec y},
 \\
 \vec v^\T\vec v &= 1+\delta & |\delta|&\le\eps.
\end{align*}
If the vector to be normalized is zero, the algorithm terminates before making the call.
\end{assumption}

Finally, we assume the accuracy parameter $\varepsilon$ is small.
\begin{assumption}[Small \(\eps\)]
\label{ass:small-eps}
For a run of at most $k_{\max}$ iterations, the accuracy parameter satisfies $0\leq \varepsilon \leq 1/(20k_{\max})$.
\end{assumption}

\subsection{The computed recurrence}

We analyze Paige's computational variant A1 \cite{paige1976}, stated explicitly in \cref{alg:lanczos}. 
This is the now-standard implementation of Lanczos.

\begin{remark}
The initial call returns \(\vec v_1\) proportional to \(\vec v+\vec\Delta_0\) with \(\norm{\vec\Delta_0}\le\eps\norm{\vec v}\), by the first equation in \cref{ass:normalization}.
In particular \(\vec v_1\) satisfies the second equation in \cref{ass:normalization}, like every later Lanczos vector, and this is the only property of \(\vec v_1\) the analysis of \cref{sec:paige} uses.
\end{remark}

Fix a run of \(k\) completed iterations for which \(\beta_1,\ldots,\beta_{k-1}>0\); the last step may be nonterminal (\(\beta_k>0\)) or terminal (\(\beta_k=0\)).
Let
\[
 \vec V_k=[\vec v_1,\ldots,\vec v_k],
 \qquad
 \vec T_k=
 \begin{bmatrix}
  \alpha_1&\beta_1&&\\
  \beta_1&\alpha_2&\ddots&\\
  &\ddots&\ddots&\beta_{k-1}\\
  &&\beta_{k-1}&\alpha_k
 \end{bmatrix}.
\]
Thus \(\vec T_k=\vec T_k^\T\), and its internal subdiagonals are positive.
If the run terminates at step \(k\), then \(\vec z_k=\vec0\) and \(\beta_k=0\).
In that case we adopt the conventions \(\vec\Delta_k=\vec0\) and \(\beta_k\vec v_{k+1}=\vec0\), even though \(\vec v_{k+1}\) is not formed.

Define the exact errors of the five kernel calls and record their bounds in one panel; the normalization call carries two guarantees, so it contributes the final two lines:
\begin{align}\allowdisplaybreaks
 \vec u_j
 &=\vec A\vec v_j+\vec e^u_j,
 &\norm{\vec e^u_j}
 &\le\eps\norm{\vec A}\norm{\vec v_j},
 \label{eq:local-u}\\
 \vec w_j
 &=\vec u_j-\beta_{j-1}\vec v_{j-1}+\vec e^w_j,
 &\norm{\vec e^w_j}
 &\le\eps\bigl(\norm{\vec u_j}
       +\beta_{j-1}\norm{\vec v_{j-1}}\bigr),
 \label{eq:local-w}\\
 \alpha_j
 &=\vec v_j^\T\vec w_j+e^\alpha_j,
 &\abs{e^\alpha_j}
 &\le\eps\norm{\vec v_j}\norm{\vec w_j},
 \label{eq:local-alpha}\\
 \vec z_j
 &=\vec w_j-\alpha_j\vec v_j+\vec e^z_j,
 &\norm{\vec e^z_j}
 &\le\eps\bigl(\norm{\vec w_j}
       +\abs{\alpha_j}\norm{\vec v_j}\bigr),
 \label{eq:local-z}\\
 \beta_j\vec v_{j+1}
 &=\vec z_j+\vec\Delta_j,
 &\norm{\vec\Delta_j}
 &\le\eps\norm{\vec z_j},
 \label{eq:local-normalize}\\
 \vec v_{j+1}^\T\vec v_{j+1}
 &=1+\delta_{j+1},
 &\abs{\delta_{j+1}}
 &\le\eps.
 \label{eq:local-unit}
\end{align}
These are identities together with their consequences from the arithmetic model.
At a terminal final step, \cref{eq:local-normalize} holds by the convention above rather than by a normalization call, and \cref{eq:local-unit} does not apply since \(\vec v_{k+1}\) is not formed.

\section{Paige's theory}
\label{sec:paige}

\subsection{Local errors}

We begin by showing that the Lanczos algorithm behaves well locally; i.e. the computed Lanczos vectors nearly satisfy a symmetric three-term recurrence, are nearly unit length, and that consecutive vectors are nearly orthogonal.
Such bounds appeared in Paige's early work; see e.g. \cite[\S2-3]{paige1976}.
Specifically, we bound the following quantities:
\begin{equation}
  \begin{aligned}
     \vec f_j&:=\vec A\vec v_j-\beta_{j-1}\vec v_{j-1}-\alpha_j\vec v_j-\beta_j\vec v_{j+1},
 \\
 g_j&:=\vec v_j^\T\vec v_j-1,
 \\
 p_j&:=\vec v_j^\T(\beta_j\vec v_{j+1}).
  \end{aligned}
 \label{eq:derived-quantities}
\end{equation}

\begin{theorem}[Local error bounds]
\label{thm:local-errors}
For \(1\le j\le k\),
\[
 \norm{\vec f_j}\lesssim_k\eps\norm{\vec A},
 \qquad
 \abs{g_j}\le\eps,
 \qquad
 \abs{p_j}\lesssim_k\eps\norm{\vec A}.
\]
\end{theorem}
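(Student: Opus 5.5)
The bound on \(g_j\) is immediate: for \(j\ge2\), \cref{eq:local-unit} gives \(g_j=\delta_j\) with \(\abs{\delta_j}\le\eps\), and for \(j=1\) the same holds by the remark on the initial normalization. In particular \(\norm{\vec v_j}\le\sqrt{1+\eps}\le 2\) for all \(j\le k\). I will prove the other two bounds together by induction on \(j\), using the hypothesis \(\beta_{j-1}\lesssim_k\norm{\vec A}\) (with \(\beta_0=0\)). Each step then yields the same bound for \(\beta_j\), with a slightly worse \(k\)-polynomial. Since the induction runs only \(k\) steps and each step multiplies the constant by an absolute factor, this has to be organized so the growth stays polynomial in \(k\). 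A cleaner route is to bound the magnitudes directly in each step without propagating a constant, as follows.

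\emph{Size bounds.} Fix \(j\) and take the chain of \cref{eq:local-u,eq:local-w,eq:local-alpha,eq:local-z,eq:local-normalize}. First, \(\norm{\vec u_j}\le(1+\eps)\norm{\vec A}\norm{\vec v_j}\lesssim\norm{\vec A}\), and \(\norm{\vec w_j}\le(1+\eps)(\norm{\vec u_j}+\beta_{j-1}\norm{\vec v_{j-1}})\). Next, \(\abs{\alpha_j}\le(1+\eps)\norm{\vec v_j}\norm{\vec w_j}\) and \(\norm{\vec z_j}\lesssim\norm{\vec w_j}\). Finally, \(\beta_j\le\beta_j\norm{\vec v_{j+1}}/\sqrt{1-\eps}\le(1+\eps)\norm{\vec z_j}/\sqrt{1-\eps}\).

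Naively this gives \(\beta_j\le C(\norm{\vec A}+\beta_{j-1})\), which grows exponentially. To avoid that, I use near-orthogonality: \(\vec z_j\approx\vec w_j-(\vec v_j^\T\vec w_j)\vec v_j\) is, up to \(O(\eps)\) errors, the projection of \(\vec w_j\) orthogonal to \(\vec v_j\). Combined with \(\vec w_j\approx\vec A\vec v_j-\beta_{j-1}\vec v_{j-1}\), this gives
\[
\beta_j^2\approx\norm{\vec z_j}^2\approx\norm{\vec w_j}^2-\alpha_j^2,
\]
and \(\norm{\vec w_j}^2\approx\norm{\vec A\vec v_j}^2-2\beta_{j-1}\vec v_{j-1}^\T\vec A\vec v_j+\beta_{j-1}^2\). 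The key cancellation is \(\vec v_{j-1}^\T\vec A\vec v_j\approx\beta_{j-1}+\) small, by the recurrence at step \(j-1\) and \(p_{j-1}\) small. This yields \(\beta_j^2+\alpha_j^2+\beta_{j-1}^2\lesssim\norm{\vec A}^2(1+\text{small})\), which is Paige's classical argument, and it gives \(\beta_j\le\norm{\vec A}(1+O(k\eps))\) without exponential growth. This step, keeping \(\beta_j\) bounded uniformly rather than geometrically, is the main obstacle. I would carry it out via a joint induction on the statements \(\beta_{j-1}\le 2\norm{\vec A}\) and \(\abs{p_{j-1}}\lesssim_k\eps\norm{\vec A}\), using \cref{ass:small-eps} to absorb \(O(k\eps)\) terms into absolute constants.

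\emph{Residual and local orthogonality.} With all of \(\norm{\vec u_j},\norm{\vec w_j},\abs{\alpha_j},\norm{\vec z_j},\beta_j\lesssim\norm{\vec A}\) in hand, substituting the chain gives
\[
\vec f_j=-\vec e^u_j-\vec e^w_j-\vec e^z_j-\vec\Delta_j,
\]
whose norm is \(\lesssim\eps\norm{\vec A}\). For \(p_j\), write
\[
p_j=\vec v_j^\T\vec z_j+\vec v_j^\T\vec\Delta_j=\vec v_j^\T\vec w_j-\alpha_j(1+g_j)+\vec v_j^\T\vec e^z_j+\vec v_j^\T\vec\Delta_j,
\]
and use \(\vec v_j^\T\vec w_j=\alpha_j-e^\alpha_j\). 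This leaves \(-e^\alpha_j-\alpha_jg_j+\vec v_j^\T(\vec e^z_j+\vec\Delta_j)\), each term of which is \(\lesssim\eps\norm{\vec A}\). Notably, this bound on \(p_j\) does not depend on earlier \(p\)'s, so the only genuinely inductive ingredient is the size of \(\beta_{j-1}\) entering through \(\norm{\vec w_j}\). At a terminal step, the convention \(\vec\Delta_k=\vec0\) and \(\beta_k\vec v_{k+1}=\vec 0\) makes the same formulas hold trivially.
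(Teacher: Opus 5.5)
Your handling of \(g_j\), and your decompositions \(\vec f_j=-(\vec e^u_j+\vec e^w_j+\vec e^z_j+\vec\Delta_j)\) and \(p_j=-e^\alpha_j-g_j\alpha_j+\vec v_j^\T(\vec e^z_j+\vec\Delta_j)\), are exactly the paper's. The gap is the a priori bound on \(\beta_j\), and hence on \(\vec w_j\), \(\alpha_j\), \(\vec z_j\). You correctly flag this as the main obstacle, but you do not establish it.

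Your key cancellation \(\vec v_{j-1}^\T\vec A\vec v_j\approx\beta_{j-1}\) does not follow from the step-\((j-1)\) recurrence and smallness of \(p_{j-1}\). Using symmetry of \(\vec A\) and the definition of \(\vec f_{j-1}\),
\[
 \beta_{j-1}\vec v_{j-1}^\T\vec A\vec v_j
 =\beta_{j-1}^2(1+g_j)+\alpha_{j-1}p_{j-1}
 +\beta_{j-2}\beta_{j-1}\,\vec v_{j-2}^\T\vec v_j
 +\beta_{j-1}\vec v_j^\T\vec f_{j-1}.
\]
The third term involves the non-adjacent inner product \(\vec v_{j-2}^\T\vec v_j\). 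This is not a local quantity, and nothing in your inductive hypotheses controls it. It can be closed by a separate recurrence for \(X_j:=\beta_{j-2}\beta_{j-1}\vec v_{j-2}^\T\vec v_j\). Pairing the recurrences at steps \(j-2\) and \(j-1\) gives \(X_j=X_{j-1}+(\alpha_{j-2}-\alpha_{j-1})p_{j-2}+\beta_{j-2}^2(g_{j-1}-g_{j-2})+\beta_{j-2}(\vec v_{j-1}^\T\vec f_{j-2}-\vec v_{j-2}^\T\vec f_{j-1})\), so \(X_j\) accumulates to \(O(j\eps\norm{\vec A}^2)\). Your joint induction on \(\beta_{j-1}\) and \(p_{j-1}\) alone does not carry \(X_j\), so as written it does not close.

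More to the point, you do not need a uniform bound such as \(\beta_j\le2\norm{\vec A}\), because \(\lesssim_k\) absorbs polynomial growth in \(j\). The exponential growth you worry about comes only from bounding \(\norm{\vec z_j}\) by the triangle inequality, which costs a factor near \(2\). Every other factor multiplying \(\beta_{j-1}\) in your chain is \(1+O(\eps)\). The projection observation you already make suffices if you use it as an inequality rather than as a Pythagorean identity. Since \(\vec v_j^\T\vec v_j\le1+\eps<2\),
\[
 \norm{\vec w_j-\vec v_j(\vec v_j^\T\vec w_j)}^2
 =\norm{\vec w_j}^2-\bigl(2-\vec v_j^\T\vec v_j\bigr)(\vec v_j^\T\vec w_j)^2
 \le\norm{\vec w_j}^2.
\]
Adding the \(e^\alpha_j\) and \(\vec e^z_j\) errors gives \(\norm{\vec z_j}\le(1+5\eps)\norm{\vec w_j}\), and then \(\beta_j\le(1+10\eps)\beta_{j-1}+2\norm{\vec A}\). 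With \(\eps\le1/(20k_{\max})\) this inducts to \(\beta_j\le4j\norm{\vec A}\), and the bounds on \(\vec u_j,\vec w_j,\alpha_j,\vec z_j\) follow. This is the paper's a priori coefficient lemma, and it never touches non-adjacent inner products. Your route, once \(X_j\) is added to the induction (and given \(k\eps\) sufficiently small), would give Paige's sharper bound of \(\beta_j\) by an absolute constant times \(\norm{\vec A}\). That sharpening is not needed for the theorem as stated.
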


The bound on \(g_j\) is immediate from \cref{ass:normalization}.
The other two are not: \(\vec f_j\) aggregates the kernel errors of step \(j\), while \(p_j\) couples the vectors \(\vec v_j\) and \(\vec v_{j+1}\), which come from different normalization calls, scaled by \(\beta_j\), which can be as large as \(\norm{\vec A}\).
Both bounds rest on an a priori envelope for the computed quantities; we establish the envelope first and prove \cref{thm:local-errors} after it.

\begin{lemma}[A priori coefficient bounds]
\label{lem:coefficients}
For \(1\le j\le k\),
\[
 \max\{\norm{\vec u_j},\norm{\vec w_j},\abs{\alpha_j},
          \norm{\vec z_j},\beta_j\}
 \lesssim_k\norm{\vec A}.
\]
\end{lemma}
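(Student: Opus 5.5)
The plan is to propagate bounds through the five kernel calls of a single step, in the order they are executed, using only the local identities \cref{eq:local-u}--\cref{eq:local-unit} together with \cref{ass:small-eps}. The key observation is that every Lanczos vector is nearly unit length by \cref{eq:local-unit} (and by the remark on \(\vec v_1\)). Hence \(\norm{\vec v_j}\le\sqrt{1+\eps}\le 2\) for all \(j\le k\), and the only quantity carried from one step to the next is \(\beta_{j-1}\).

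First, \(\beta_{j-1}\) is controlled directly by \(\norm{\vec z_{j-1}}\). Taking norms in \cref{eq:local-normalize} and using \(\norm{\vec v_j}\ge\sqrt{1-\eps}\) gives
\[
\beta_{j-1}\sqrt{1-\eps}\le(1+\eps)\norm{\vec z_{j-1}},
\]
so \(\beta_{j-1}\le 2\norm{\vec z_{j-1}}\).

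Within step \(j\) the bounds then cascade:
\begin{itemize}
\item By \cref{eq:local-u}, \(\norm{\vec u_j}\le(1+\eps)\norm{\vec A}\norm{\vec v_j}\le 3\norm{\vec A}\).
\item By \cref{eq:local-w}, \(\norm{\vec w_j}\le(1+\eps)(\norm{\vec u_j}+2\beta_{j-1})\).
\item By \cref{eq:local-alpha}, \(\abs{\alpha_j}\le(1+\eps)\norm{\vec v_j}\norm{\vec w_j}\le 3\norm{\vec w_j}\).
\item By \cref{eq:local-z}, \(\norm{\vec z_j}\le(1+\eps)(\norm{\vec w_j}+2\abs{\alpha_j})\le 8\norm{\vec w_j}\).
\end{itemize}

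This naive chain yields a recursion of the form \(\norm{\vec z_j}\le C(\norm{\vec A}+\norm{\vec z_{j-1}})\) with an absolute constant \(C>1\). That is the main obstacle: iterating it gives \(C^k\norm{\vec A}\), which is exponential rather than polynomial in \(k\). The crude triangle inequality loses because \(\vec w_j-\alpha_j\vec v_j\) is essentially the projection of \(\vec w_j\) away from \(\vec v_j\), whereas the bound above treats the two terms as if they added. The term \(\beta_{j-1}\vec v_{j-1}\) is also nearly cancelled by the component of \(\vec A\vec v_j\) along \(\vec v_{j-1}\).

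To get a polynomial bound I would use an orthogonality argument rather than norms alone.

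\textbf{Step 1 (near-orthogonal projection).} Since \(\alpha_j\approx\vec v_j^\T\vec w_j\) and \(\vec v_j\) is nearly unit,
\[
\norm{\vec w_j-\alpha_j\vec v_j}^2=\norm{\vec w_j}^2-(2\alpha_j\vec v_j^\T\vec w_j-\alpha_j^2\norm{\vec v_j}^2)\le\norm{\vec w_j}^2(1+O(\eps)),
\]
so \(\norm{\vec z_j}\le(1+c\eps)\norm{\vec w_j}+c\eps\norm{\vec w_j}\) with multiplicative factor \(1+O(\eps)\) rather than \(8\).

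\textbf{Step 2 (controlling \(\vec w_j\)).} For \(\vec w_j\approx\vec A\vec v_j-\beta_{j-1}\vec v_{j-1}\), I would show \(\beta_{j-1}\approx\vec v_{j-1}^\T\vec A\vec v_j\) up to \(O(\eps)(\norm{\vec A}+\text{previous sizes})\). This uses \(\vec v_j\approx\vec z_{j-1}/\beta_{j-1}\) and the symmetry of \(\vec A\): one has \(\beta_{j-1}\approx\norm{\vec z_{j-1}}\approx\vec v_j^\T\vec z_{j-1}\), and \(\vec z_{j-1}\approx\vec A\vec v_{j-1}-\alpha_{j-1}\vec v_{j-1}-\beta_{j-2}\vec v_{j-2}\), whose inner product with \(\vec v_j\) is \(\vec v_{j-1}^\T\vec A\vec v_j\) plus terms involving \(\vec v_j^\T\vec v_{j-1}\) and \(\vec v_j^\T\vec v_{j-2}\). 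To avoid needing those inner products to be small, I would instead simply accept that \(\beta_{j-1}\le\norm{\vec A}(1+O(\eps))+O(\eps)M_{j-1}\), where \(M_{j-1}\) is the running maximum of all quantities through step \(j-1\).

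\textbf{Step 3 (closing the recursion).} The cleaner route to the same end is to compare the squared norms directly: \(\norm{\vec w_j}^2\le\norm{\vec A\vec v_j}^2+O(\eps)M_{j-1}^2\), after expanding \(\norm{\vec A\vec v_j-\beta_{j-1}\vec v_{j-1}}^2\) and using \(\vec v_{j-1}^\T\vec A\vec v_j\approx\beta_{j-1}\). Either way, the target is a recursion of the form
\[
M_j\le(1+c\eps)M_{j-1}+c\norm{\vec A},
\]
with \(M_j\) the running maximum above. Unrolling gives \(M_k\le(1+c\eps)^k(M_0+ck\norm{\vec A})\). Since \((1+c\eps)^k\le e^{c/20}\) by \cref{ass:small-eps}, this yields \(M_k\lesssim_k\norm{\vec A}\), which is the claimed bound on \(\norm{\vec u_j}\), \(\norm{\vec w_j}\), \(\abs{\alpha_j}\), \(\norm{\vec z_j}\) and \(\beta_j\).

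The delicate point is Step 2: the inner-product approximation for \(\beta_{j-1}\) must hold with error \(O(\eps)M_{j-1}\) and a multiplicative constant of exactly \(1+O(\eps)\) on the main term, not some constant \(C>1\). Otherwise the recursion reverts to the exponential one.
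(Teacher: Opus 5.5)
There is a genuine gap, at exactly the point you flag as delicate. You derive the recursion \(M_j\le(1+c\eps)M_{j-1}+c\norm{\vec A}\) only through Step~2 or Step~3, and both rest on the approximation \(\beta_{j-1}\approx\vec v_{j-1}^\T\vec A\vec v_j\). As you note, expanding \(\vec v_j^\T\vec z_{j-1}\) produces the term \(\beta_{j-2}\vec v_j^\T\vec v_{j-2}\), and no local identity controls it. \cref{thm:local-errors} bounds only the weighted adjacent products \(p_j=\beta_j\vec v_j^\T\vec v_{j+1}\), whereas orthogonality between \(\vec v_j\) and \(\vec v_{j-2}\) is a non-local matter. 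The bound you propose to ``simply accept,'' \(\beta_{j-1}\le\norm{\vec A}(1+O(\eps))+O(\eps)M_{j-1}\), is itself what needs proof. It is a uniform-in-\(j\) bound, stronger than what the lemma asserts. Proving it would require tracking such products across all earlier steps, and your proposal does not do this. So as written, the recursion is never established.

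The missing observation is that you need no cancellation between \(\vec A\vec v_j\) and \(\beta_{j-1}\vec v_{j-1}\), because the lemma tolerates polynomial growth in \(k\). Your diagnosis of the exponential blow-up is only partly right. The factor \(8\) in the \(\vec z_j\) bound is a real loss, and your Step~1 repairs it essentially as the paper does, via \(\norm{\vec w_j-\vec v_j(\vec v_j^\T\vec w_j)}^2=\norm{\vec w_j}^2-(2-\vec v_j^\T\vec v_j)(\vec v_j^\T\vec w_j)^2\le\norm{\vec w_j}^2\). The other two factors of \(2\), however, come only from the wasteful estimates \(\norm{\vec v_{j-1}}\le2\) and \(\beta_{j-1}\le2\norm{\vec z_{j-1}}\). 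If you keep \(\norm{\vec v_{j-1}}\le\sqrt{1+\eps}\) in the plain triangle inequality, you get \(\norm{\vec w_j}\le(1+\eps)^{5/2}\norm{\vec A}+(1+\eps)^{3/2}\beta_{j-1}\). So the factor \(1+O(\eps)\) on \(\beta_{j-1}\) that you hoped to extract from Step~2 is already present, and the \(\norm{\vec u_j}\) contribution is merely additive. Using \(\norm{\vec v_{j+1}}\ge\sqrt{1-\eps}\) likewise gives \(\beta_j\le(1+2\eps)\norm{\vec z_j}\). Combined with Step~1, this yields \(\beta_j\le(1+10\eps)\beta_{j-1}+2\norm{\vec A}\). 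Induction under \cref{ass:small-eps} then gives \(\beta_j\le4j\norm{\vec A}\), and the bounds on \(\vec u_j\), \(\vec w_j\), \(\alpha_j\), \(\vec z_j\) follow by substitution. This is the paper's argument: it settles for linear growth in \(j\) rather than the uniform \(O(\norm{\vec A})\) bound your Step~2 was reaching for.
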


\begin{proof}
The two computed projection equations in the local-error panel give
\[
 \vec z_j=
 \underbrace{\vec w_j-\vec v_j(\vec v_j^\T\vec w_j)}_
             {\text{exact-arithmetic update}}
 -e_j^\alpha\vec v_j+\vec e_j^z.
\]
The bracketed rank-one update does not increase the norm, even though \(\vec v_j\) is only nearly normalized.
By \cref{ass:normalization}, \(\norm{\vec v_j}^2\le1+\eps<2\).
Therefore, direct expansion gives
\[
 \norm{\vec w_j-\vec v_j(\vec v_j^\T\vec w_j)}^2
 =\norm{\vec w_j}^2
  -\bigl(2-\vec v_j^\T\vec v_j\bigr)
   (\vec v_j^\T\vec w_j)^2
 \le\norm{\vec w_j}^2.
\]
The inner-product model and Cauchy--Schwarz give
\[
 \abs{\alpha_j}
 \le(1+\eps)\norm{\vec v_j}\norm{\vec w_j},
 \qquad
 \abs{e_j^\alpha}\norm{\vec v_j}
 \le\eps(1+\eps)\norm{\vec w_j}.
\]
The update error therefore satisfies
\[
 \norm{\vec e_j^z}
 \le\eps\bigl(\norm{\vec w_j}+\abs{\alpha_j}\norm{\vec v_j}\bigr)
 \le\eps\bigl(1+(1+\eps)^2\bigr)\norm{\vec w_j}.
\]
Combining these three bounds gives
\[
 \norm{\vec z_j}
 \le\bigl[1+\eps(1+\eps)
              +\eps\bigl(1+(1+\eps)^2\bigr)\bigr]\norm{\vec w_j}
 \le(1+5\eps)\norm{\vec w_j},
\]
where the last inequality holds because \(\eps\le1/20\), by \cref{ass:small-eps}.

It remains to control the size of \(\vec w_j\).
The first two rows of the local-error panel and \(\norm{\vec v_i}\le\sqrt{1+\eps}\) give
\[
 \norm{\vec u_j}
 \le(1+\eps)\norm{\vec A}\norm{\vec v_j}
 \le(1+\eps)^{3/2}\norm{\vec A}
\]
and
\[
 \norm{\vec w_j}
 \le(1+\eps)\bigl(\norm{\vec u_j}
                    +\beta_{j-1}\norm{\vec v_{j-1}}\bigr)
 \le(1+\eps)^{5/2}\norm{\vec A}+(1+\eps)^{3/2}\beta_{j-1}.
\]
At a nonterminal step, \cref{eq:local-normalize} and \(\norm{\vec v_{j+1}}\ge\sqrt{1-\eps}\), from \cref{eq:local-unit}, give
\[
 \beta_j
 =\frac{\norm{\vec z_j+\vec\Delta_j}}{\norm{\vec v_{j+1}}}
 \le\frac{1+\eps}{\sqrt{1-\eps}}\norm{\vec z_j}
 \le(1+2\eps)\norm{\vec z_j},
\]
again using \(\eps\le1/20\).
With the bound on \(\vec z_j\), this yields the scalar recurrence
\[
 \begin{aligned}
  \beta_j
  &\le(1+2\eps)\norm{\vec z_j}\\
  &\le(1+2\eps)(1+5\eps)
       \bigl((1+\eps)^{5/2}\norm{\vec A}
             +(1+\eps)^{3/2}\beta_{j-1}\bigr)\\
  &\le(1+10\eps)\beta_{j-1}+2\norm{\vec A},
 \end{aligned}
\]
where the last line uses the two elementary inequalities
\[
 (1+2\eps)(1+5\eps)(1+\eps)^{3/2}\le1+10\eps,
 \qquad
 (1+2\eps)(1+5\eps)(1+\eps)^{5/2}\le2,
\]
both valid for \(0\le\eps\le1/20\).
Starting from \(\beta_0=0\), induction on \(j\) then gives \(\beta_j\le4j\norm{\vec A}\): assuming the bound at \(j-1\),
\[
 \beta_j
 \le(1+10\eps)\,4(j-1)\norm{\vec A}+2\norm{\vec A}
 =\bigl(4(j-1)+40(j-1)\eps+2\bigr)\norm{\vec A}
 \le4j\norm{\vec A},
\]
because \(40(j-1)\eps\le40k_{\max}\eps\le2\) by \cref{ass:small-eps}.
This is the only place the smallness assumption is used quantitatively: each step adds at most \(2\norm{\vec A}\) and inflates the previous bound by a factor \(1+10\eps\), and the threshold \(\eps\le1/(20k_{\max})\) is exactly what keeps the drift compounded over \(k_{\max}\) steps below the additive increment.
Substituting \(\beta_{j-1}\le4(j-1)\norm{\vec A}\) into the displayed bounds, and again using \(\eps\le1/20\), gives
\[
 \norm{\vec u_j}\le2\norm{\vec A},
 \qquad
 \norm{\vec w_j}\le5j\norm{\vec A},
 \qquad
 \abs{\alpha_j}\le6j\norm{\vec A},
 \qquad
 \norm{\vec z_j}\le7j\norm{\vec A}.
\]
Every quantity in the statement is therefore at most \(7j\norm{\vec A}\), which proves the claimed bounds.
At a terminal final step \(\beta_k=0\), and the same bounds apply using the already controlled \(\beta_{k-1}\).
\end{proof}

\begin{proof}[Proof of \cref{thm:local-errors}]
The bound on \(g_j\) is the unit-norm guarantee of the normalization calls: \(g_j=\delta_j\) in \cref{eq:local-unit} for \(j\ge2\), and the initial call gives the same bound for \(g_1\).

For \(\vec f_j\), eliminate the intermediate vectors: substitute \cref{eq:local-u} into \cref{eq:local-w}, the result into \cref{eq:local-z}, and that into \cref{eq:local-normalize}.
Rearranging the result against the definition of \(\vec f_j\) in \cref{eq:derived-quantities} gives
\[
 \vec f_j=-(\vec e^u_j+\vec e^w_j+\vec e^z_j+\vec\Delta_j).
\]
Apply the triangle inequality and insert the four error bounds from the local-error panel:
\begin{align*}
   \norm{\vec f_j}
 &\le\norm{\vec e^u_j}+\norm{\vec e^w_j}
   +\norm{\vec e^z_j}+\norm{\vec\Delta_j}
 \\&\le\eps\bigl(\norm{\vec A}\norm{\vec v_j}
   +\norm{\vec u_j}+\beta_{j-1}\norm{\vec v_{j-1}}
   +\norm{\vec w_j}+\abs{\alpha_j}\norm{\vec v_j}
   +\norm{\vec z_j}\bigr).
\end{align*}
By \cref{ass:normalization} each \(\norm{\vec v_i}\) is at most \(\sqrt{1+\eps}\), and by \cref{lem:coefficients} each of \(\norm{\vec u_j}\), \(\norm{\vec w_j}\), \(\abs{\alpha_j}\), \(\norm{\vec z_j}\), and \(\beta_{j-1}\) is \(\lesssim_k\norm{\vec A}\).
The bracket is therefore \(\lesssim_k\norm{\vec A}\), which proves the bound on \(\vec f_j\).

For \(p_j\), \crefrange{eq:local-alpha}{eq:local-normalize} give
\[
 p_j
 =\vec v_j^\T\vec z_j+\vec v_j^\T\vec\Delta_j
 =\vec v_j^\T\vec w_j-\alpha_j(1+g_j)
   +\vec v_j^\T\vec e^z_j+\vec v_j^\T\vec\Delta_j.
\]
Since \(\vec v_j^\T\vec w_j=\alpha_j-e^\alpha_j\), the two exact copies of \(\alpha_j\) cancel, leaving
\[
 p_j=-e^\alpha_j-g_j\alpha_j
      +\vec v_j^\T\vec e^z_j+\vec v_j^\T\vec\Delta_j.
\]
Apply \cref{ass:normalization}, \cref{eq:local-alpha,eq:local-z,eq:local-normalize}, and \cref{lem:coefficients} to obtain the bound on \(p_j\).
The matrix--vector error has already disappeared in the exact cancellation.
\end{proof}

\subsection{The Gram commutator}

Stack the recurrence errors into \(\vec F_k:=[\vec f_1,\ldots,\vec f_k]\) and expresses the run as a perturbed block recurrence,
\begin{equation}
 \vec A\vec V_k=\vec V_k\vec T_k+\beta_k\vec v_{k+1}\vec e_k^\T+\vec F_k.
 \label{eq:governing}
\end{equation}
Define the Gram defect and its triangular decomposition by
\begin{equation}
 \vec\Omega_k:=\vec V_k^\T \vec V_k-\vec I
 =\vec R_k^\T+\vec D_k+\vec R_k,
 \label{eq:gram-split}
\end{equation}
where \(\vec R_k:=\up(\vec\Omega_k)\) and \(\vec D_k:=\diag(g_1,\ldots,g_k)\), and \(\up\) retains the strict upper triangle.
No smallness is assumed for \(\vec R_k\) or \(\vec\Omega_k\).
Set
\begin{equation}
 \vec C_k:=\vec V_k^\T(\beta_k\vec v_{k+1})\vec e_k^\T,
 \qquad
 \vec G_k:=\vec V_k^\T \vec F_k-\vec F_k^\T \vec V_k.
 \label{eq:CG-def}
\end{equation}

The following identity, which prevents the Gram defect \(\vec\Omega_k\) from being arbitrary by forcing it to nearly commute with \(\vec T_k\), is the algebraic core of Paige's analysis \cite[eq.~(22)]{paige1976}, \cite[eqs.~(2.17),(2.18)]{paige1980}.

\begin{lemma}[Gram commutator]
\label{lem:gram-commutator}
The governing relation implies the exact identity
\[
 \vec T_k\vec\Omega_k-\vec\Omega_k\vec T_k=\vec C_k-\vec C_k^\T+\vec G_k.
\]
Every entry of \(\vec G_k\) has magnitude \(\lesssim_k\eps\norm{\vec A}\).
Moreover, with \(p_0=0\) and
\[
 \vec N_k:=\diag(p_0-p_1,p_1-p_2,\ldots,p_{k-1}-p_k),
\]
the corresponding upper-triangular identity is
\[
 \vec T_k\vec R_k-\vec R_k\vec T_k
 =\vec C_k+\vec N_k+\up(\vec G_k)-\up(\vec T_k\vec D_k-\vec D_k\vec T_k).
\]
The last term is supported only on the first superdiagonal, and every entry of it also has magnitude \(\lesssim_k\eps\norm{\vec A}\).
\end{lemma}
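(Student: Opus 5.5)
The plan is to multiply the governing relation \cref{eq:governing} on the left by \(\vec V_k^\T\), transpose, and subtract; the result is the full identity. Then we extract its strict upper triangle.

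For the full identity, write \(\vec V_k^\T\vec A\vec V_k=\vec V_k^\T\vec V_k\vec T_k+\vec C_k+\vec V_k^\T\vec F_k\). The left side is symmetric because \(\vec A=\vec A^\T\), so it equals its transpose \(\vec T_k\vec V_k^\T\vec V_k+\vec C_k^\T+\vec F_k^\T\vec V_k\), using \(\vec T_k=\vec T_k^\T\). Subtracting the two expressions and substituting \(\vec V_k^\T\vec V_k=\vec I+\vec\Omega_k\), the identity terms cancel, since \(\vec I\) commutes with \(\vec T_k\). Rearranging gives \(\vec T_k\vec\Omega_k-\vec\Omega_k\vec T_k=\vec C_k-\vec C_k^\T+\vec G_k\) exactly. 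For the entry bound, \((\vec G_k)_{ij}=\vec v_i^\T\vec f_j-\vec f_i^\T\vec v_j\). Cauchy--Schwarz, \(\norm{\vec v_i}\le\sqrt{1+\eps}\) and \cref{thm:local-errors} then give \(\lesssim_k\eps\norm{\vec A}\).

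For the upper-triangular identity, insert \(\vec\Omega_k=\vec R_k^\T+\vec D_k+\vec R_k\) and apply \(\up\) to both sides. The key structural facts come from the tridiagonal band. If \(\vec L\) is strictly lower triangular, then \(\vec T_k\vec L-\vec L\vec T_k\) vanishes above the first superdiagonal. Its first-superdiagonal entry \((i,i+1)\) equals \(\beta_i(\vec L_{i+1,i+1}-\vec L_{ii})\) plus terms \(\vec T_{i,m}\vec L_{m,i+1}-\vec L_{i,m}\vec T_{m,i+1}\) with \(m\in\{i-1,i,i+1,i+2\}\). Since \(\vec L\) has zero diagonal and zero upper part, each such term is zero. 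Hence \(\up(\vec T_k\vec R_k^\T-\vec R_k^\T\vec T_k)=\vec0\). Similarly, for strictly upper \(\vec R_k\), the commutator \(\vec T_k\vec R_k-\vec R_k\vec T_k\) has diagonal entries \(\beta_{i-1}\rho_{i-1,i}-\beta_i\rho_{i,i+1}\), where \(\rho\) denotes entries of \(\vec R_k\), and everything below the diagonal vanishes.

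The diagonal term needs care. Since \(\rho_{i,i+1}=\vec v_i^\T\vec v_{i+1}\), we have \(\beta_i\rho_{i,i+1}=p_i\), so the diagonal of \(\vec T_k\vec R_k-\vec R_k\vec T_k\) is exactly \(\vec N_k\). At a terminal step \(p_k=0\) by convention, and the formula remains consistent. Thus \(\vec T_k\vec R_k-\vec R_k\vec T_k=\up(\vec T_k\vec R_k-\vec R_k\vec T_k)+\vec N_k\). Taking \(\up\) of the full identity gives \(\up(\vec T_k\vec R_k-\vec R_k\vec T_k)+\up(\vec T_k\vec D_k-\vec D_k\vec T_k)=\up(\vec C_k)-\up(\vec C_k^\T)+\up(\vec G_k)\). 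Here \(\vec C_k\) is supported in the last column, so \(\up(\vec C_k)\) is \(\vec C_k\) minus its \((k,k)\) entry \(p_k\). Also, \(\vec C_k^\T\) lives in the last row, so \(\up(\vec C_k^\T)=\vec0\).

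The bookkeeping on the corner entry \((k,k)\) is the main obstacle I expect. The \(p_k\) removed from \(\vec C_k\) should be restored precisely by the last entry \(p_{k-1}-p_k\) of \(\vec N_k\). I would check this by computing the diagonal of the full identity directly. The left side has zero diagonal, being a commutator of symmetric matrices. Meanwhile \(\vec G_k\) is antisymmetric with zero diagonal, and \(\vec C_k-\vec C_k^\T\) has zero diagonal. This confirms that the diagonal and off-diagonal parts assemble into \(\vec C_k+\vec N_k+\up(\vec G_k)-\up(\vec T_k\vec D_k-\vec D_k\vec T_k)\).

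Finally, \(\vec T_k\vec D_k-\vec D_k\vec T_k\) has entries \(\vec T_{ij}(g_j-g_i)\), which are nonzero only when \(\abs{i-j}=1\). So its upper part sits on the first superdiagonal with entries \(\beta_i(g_{i+1}-g_i)\). By \cref{lem:coefficients} and \(\abs{g_j}\le\eps\), these are \(\lesssim_k\eps\norm{\vec A}\).
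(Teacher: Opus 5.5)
Your route is the paper's: premultiply by \(\vec V_k^\T\), symmetrize, split \(\vec\Omega_k\), and take \(\up\). Most of your steps are correct: the full identity, the entry bound on \(\vec G_k\), the vanishing of \(\up(\vec T_k\vec R_k^\T-\vec R_k^\T\vec T_k)\), the formulas \(\up(\vec C_k)=\vec C_k-p_k\vec e_k\vec e_k^\T\) and \(\up(\vec C_k^\T)=\vec0\), and the \(\vec D_k\) term.

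The gap is exactly at the corner you flagged. Your claim that the diagonal of \(\vec T_k\vec R_k-\vec R_k\vec T_k\) is exactly \(\vec N_k\) is false at a nonterminal step. \(\vec R_k\) is \(k\times k\) and strictly upper triangular, so its last row is zero and \((\vec R_k\vec T_k)_{kk}=0\). There is no term \(\beta_k\rho_{k,k+1}\), because \(\vec v_k^\T\vec v_{k+1}\) is not an entry of \(\vec R_k\). The actual diagonal is \((p_0-p_1,\ldots,p_{k-2}-p_{k-1},p_{k-1})=\vec N_k+p_k\vec e_k\vec e_k^\T\). Taken literally, your diagonal claim combined with your (correct) \(\up(\vec C_k)=\vec C_k-p_k\vec e_k\vec e_k^\T\) gives \(\vec T_k\vec R_k-\vec R_k\vec T_k=\vec C_k-p_k\vec e_k\vec e_k^\T+\vec N_k+\up(\vec G_k)-\up(\vec T_k\vec D_k-\vec D_k\vec T_k)\). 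This contradicts the statement whenever \(p_k\ne0\). Noting that \(p_k=0\) at a terminal step does not cover the generic case.

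Your proposed consistency check cannot detect this. Both sides of the full identity are antisymmetric, so their diagonals vanish identically. On the left, \(\vec T_k\vec R_k^\T-\vec R_k^\T\vec T_k=-(\vec T_k\vec R_k-\vec R_k\vec T_k)^\T\), so its diagonal cancels that of \(\vec T_k\vec R_k-\vec R_k\vec T_k\) for any \(\vec R_k\) whatsoever. The diagonal equation therefore reads \(0=0\) and says nothing about the diagonal of \(\vec T_k\vec R_k-\vec R_k\vec T_k\) alone.

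The repair is the direct computation the paper makes. One has \((\vec T_k\vec R_k)_{jj}=p_{j-1}\), and \((\vec R_k\vec T_k)_{jj}=p_j\) for \(j<k\) but \((\vec R_k\vec T_k)_{kk}=0\). Hence
\(\vec T_k\vec R_k-\vec R_k\vec T_k=\up(\vec T_k\vec R_k-\vec R_k\vec T_k)+\vec N_k+p_k\vec e_k\vec e_k^\T\).
The extra \(p_k\vec e_k\vec e_k^\T\) exactly restores the corner entry that \(\up\) removed from \(\vec C_k\). This yields \(\vec C_k+\vec N_k+\up(\vec G_k)-\up(\vec T_k\vec D_k-\vec D_k\vec T_k)\), as stated.
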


\begin{proof}
Premultiplying \cref{eq:governing} by \(\vec V_k^\T\) gives
\[
 \vec V_k^\T \vec A\vec V_k=(\vec I+\vec\Omega_k)\vec T_k+\vec C_k+\vec V_k^\T \vec F_k.
\]
The left side is symmetric.
Equating the right side with its transpose proves the first identity.
The entry bound follows from \((\vec G_k)_{rs}=\vec v_r^\T\vec f_s-\vec f_r^\T\vec v_s\), Cauchy--Schwarz, and \cref{thm:local-errors}, which controls both factors: \(\norm{\vec v_r}\le\sqrt{1+\eps}\) through the bound on \(g_r\), and \(\norm{\vec f_s}\lesssim_k\eps\norm{\vec A}\).

Insert \cref{eq:gram-split} in the first identity and retain the strict upper triangle.
Tridiagonality of \(\vec T_k\) and strict upper triangularity of \(\vec R_k\) imply that \(\vec T_k\vec R_k^\T-\vec R_k^\T \vec T_k\) is lower triangular.
The same two properties pin each diagonal entry of \(\vec T_k\vec R_k\) and \(\vec R_k\vec T_k\) to a single product:
\[
 (\vec T_k\vec R_k)_{jj}
 =\beta_{j-1}\vec v_{j-1}^\T\vec v_j
 =p_{j-1},
 \qquad
 (\vec R_k\vec T_k)_{jj}
 =\beta_j\vec v_j^\T\vec v_{j+1}
 =p_j,
\]
where the first is zero for \(j=1\) and the second is absent for \(j=k\), because \(\vec R_k\) has no column \(k+1\).
Thus the diagonal of \(\vec T_k\vec R_k-\vec R_k\vec T_k\) is \(p_{j-1}-p_j\), except that its last entry is \(p_{k-1}\); the right side of the second identity agrees, because \(\vec N_k\) contributes \(p_{j-1}-p_j\) and \((\vec C_k)_{kk}=p_k\) restores the missing \(-p_k\).
This gives the second identity.
Finally, \((\vec T_k\vec D_k-\vec D_k\vec T_k)_{j,j+1}=\beta_j(g_{j+1}-g_j)\), whose magnitude is at most \(2\eps\beta_j\lesssim_k\eps\norm{\vec A}\), by \cref{thm:local-errors,lem:coefficients}.
\end{proof}

\subsection{Loss of orthogonality}

Let \((\theta,\vec y)\) be a unit eigenpair of \(\vec T_k\), and define the corresponding Ritz vector \(\vec x:=\vec V_k\vec y\).
Because \(\vec T_k\vec y=\theta\vec y\), the governing relation \cref{eq:governing} gives the exact residual formula
\begin{equation}
 (\vec A-\theta\vec I)\vec x
 =(\beta_k\vec v_{k+1})(\vec e_k^\T\vec y)+\vec F_k\vec y,
 \label{eq:ritz-residual-identity}
\end{equation}
and therefore
\begin{equation}
 \norm{(\vec A-\theta\vec I)\vec x}
 \le\sqrt{1+\eps}\,\beta_k\abs{\vec e_k^\T\vec y}
      +\norm{\vec F_k}.
 \label{eq:ritz-residual-bound}
\end{equation}
In exact arithmetic \(\vec F_k=\vec0\) and \(\norm{\vec v_{k+1}}=1\), so \(\beta_k\abs{\vec e_k^\T\vec y}\) is exactly the residual norm of the pair \((\theta,\vec x)\); in finite precision it determines the residual norm up to \(\lesssim_k\eps\norm{\vec A}\), while remaining computable from \(\vec T_k\) alone.
We therefore call \(\beta_k\abs{\vec e_k^\T\vec y}\) the scalar residual estimate of the pair.
When the residual estimate is small, the Ritz pair is nearly an eigenpair: since \(\vec A\) is symmetric, \cref{eq:ritz-residual-bound} gives
\[
 \dist\bigl(\theta,\spec(\vec A)\bigr)
 \le\frac{\norm{(\vec A-\theta\vec I)\vec x}}{\norm{\vec x}}
 \le\frac{\sqrt{1+\eps}\,\beta_k\abs{\vec e_k^\T\vec y}+\norm{\vec F_k}}{\norm{\vec x}},
\]
so \(\theta\) is then nearly an eigenvalue of \(\vec A\), and \(\vec x/\norm{\vec x}\) is nearly a corresponding eigenvector whenever that eigenvalue is well separated from the rest of the spectrum.
The caveat is the denominator: loss of orthogonality can make \(\norm{\vec x}\) small, and removing this caveat is the purpose of the next subsection.
The next theorem is the central loss-of-orthogonality bound in Paige's finite-precision analysis \cite[eqs.~(3.11)--(3.13)]{paige1980}.

\begin{figure}[t]
\centering
\includegraphics[scale=.86]{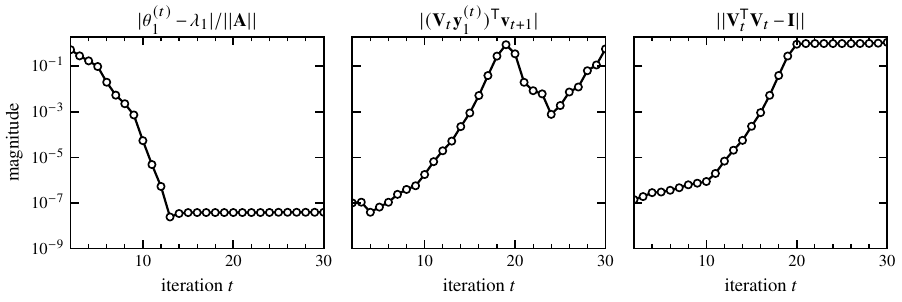}
\caption{Convergence and loss of orthogonality for the Ritz approximation to the largest eigenvalue \(\lambda_1=1\), in the same single-precision Lanczos run as \cref{fig:backward-experiment}.
At iteration \(t\), let \((\theta_1^{(t)},\vec y_1^{(t)})\) be the largest unit eigenpair of \(\vec T_t\).
Left: the normalized Ritz-value error \(\abs{\theta_1^{(t)}-\lambda_1}/\norm{\vec A}\).
  Center: the corresponding orthogonality defect \(\abs{(\vec V_t\vec y_1^{(t)})^\T\vec v_{t+1}}\).
  Right: the global loss of orthogonality \(\norm{\vec V_t^\T\vec V_t-\vec I}\).
  The Ritz value reaches the rounding-error floor around iteration 13, after which both measures of lost orthogonality grow to order one, consistent with the convergence--orthogonality dichotomy of \cref{thm:paige}.}
\label{fig:paige-dichotomy}
\end{figure}

\begin{theorem}[Paige loss-of-orthogonality bound]
\label{thm:paige}
For every unit eigenpair \((\theta,\vec y)\) of \(\vec T_k\), the Ritz vector \(\vec x=\vec V_k\vec y\) satisfies
\[
 \abs{\vec x^\T\vec{v}_{k+1}}\cdot\beta_k\abs{\vec e_k^\T\vec y}
 \lesssim_k\eps\norm{\vec A}.
\]
\end{theorem}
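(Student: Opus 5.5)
We multiply the upper-triangular identity of \cref{lem:gram-commutator} on the left by \(\vec y^\T\) and on the right by \(\vec y\), following Paige's original argument. Since \(\vec T_k\vec y=\theta\vec y\) and \(\vec T_k\) is symmetric, the left side vanishes exactly:
\[
 \vec y^\T(\vec T_k\vec R_k-\vec R_k\vec T_k)\vec y=\theta\,\vec y^\T\vec R_k\vec y-\theta\,\vec y^\T\vec R_k\vec y=0.
\]
Applying the same sandwich to the right side, the term \(\vec C_k\) gives
\[
 \vec y^\T\vec C_k\vec y=\bigl(\vec y^\T\vec V_k^\T(\beta_k\vec v_{k+1})\bigr)(\vec e_k^\T\vec y)=\beta_k(\vec x^\T\vec v_{k+1})(\vec e_k^\T\vec y),
\]
which is, up to sign, exactly the quantity in the statement. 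We therefore obtain the identity
\[
 \beta_k(\vec x^\T\vec v_{k+1})(\vec e_k^\T\vec y)
 =-\vec y^\T\vec N_k\vec y-\vec y^\T\up(\vec G_k)\vec y+\vec y^\T\up(\vec T_k\vec D_k-\vec D_k\vec T_k)\vec y,
\]
and it remains to show that each term on the right is \(\lesssim_k\eps\norm{\vec A}\).

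The three terms are bounded as follows.
\begin{enumerate}[label=(\roman*)]
 \item \(\vec N_k\) is diagonal, with entries \(p_{j-1}-p_j\). By \cref{thm:local-errors} these have magnitude \(\lesssim_k\eps\norm{\vec A}\). Since \(\norm{\vec y}=1\), we get \(\abs{\vec y^\T\vec N_k\vec y}\le\max_j\abs{p_{j-1}-p_j}\lesssim_k\eps\norm{\vec A}\).
 \item \(\up(\vec G_k)\) is a \(k\times k\) matrix whose entries, by \cref{lem:gram-commutator}, each have magnitude \(\lesssim_k\eps\norm{\vec A}\). Its spectral norm is at most its Frobenius norm, which is at most \(k\) times the maximal entry. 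The extra factor of \(k\) is absorbed by the \(\lesssim_k\) convention.
 \item The last term is supported on the first superdiagonal, with entries \(\lesssim_k\eps\norm{\vec A}\) by \cref{lem:gram-commutator}, so the same Frobenius argument applies.
\end{enumerate}
The triangle inequality then gives the claim. Note that \(\vec R_k\) itself never needs to be small; it drops out exactly in the sandwich, which is the whole point.

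The only delicate point is the terminal case. If \(\beta_k=0\), the conventions \(\beta_k\vec v_{k+1}=\vec0\) and \(\vec\Delta_k=\vec0\) make \(\vec C_k=\vec0\) and \(p_k=0\), and the statement is trivially true. I should also confirm that \(\vec C_k\) in \cref{lem:gram-commutator} carries the product \(\beta_k\vec v_{k+1}\) as a single object, so that no division by \(\beta_k\) ever occurs. Otherwise I expect no real obstacle: the exact cancellation of the left side is the key step, and the rest is bookkeeping already done in \cref{thm:local-errors,lem:gram-commutator}.
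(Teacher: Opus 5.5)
Your proposal is correct and takes the same route as the paper. You sandwich the upper-triangular identity of \cref{lem:gram-commutator} between \(\vec y^\T\) and \(\vec y\), so the commutator side vanishes and the \(\vec C_k\) term becomes the boundary product; you then bound the three remaining quadratic forms by norms of \(k\times k\) matrices whose entries are \(\lesssim_k\eps\norm{\vec A}\).
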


At a nonterminal step, \cref{thm:paige} gives the usual convergence--orthogonality dichotomy:
\begin{equation}
 \abs{\vec x^\T\vec v_{k+1}}
 \lesssim_k\frac{\eps\norm{\vec A}}{\beta_k\abs{\vec e_k^\T\vec y}}.
 \label{eq:dichotomy}
\end{equation}
Thus appreciable loss of orthogonality against a Ritz direction can occur only when its scalar residual estimate is small, that is, by \cref{eq:ritz-residual-bound}, only when the Ritz pair has nearly converged.
The same tradeoff is visible numerically in \cref{fig:paige-dichotomy}.

\begin{proof}
Fix a unit eigenpair \((\theta,\vec y)\) and sandwich the second identity in \cref{lem:gram-commutator} between \(\vec y^\T\) and \(\vec y\).
The left side vanishes because \(\vec T_k\vec y=\theta\vec y\).
The \(\vec C_k\) term is the boundary product,
\[
 \vec y^\T \vec C_k\vec y
 =\bigl(\vec x^\T(\beta_k\vec v_{k+1})\bigr)(\vec e_k^\T\vec y),
\]
so the sandwiched identity rearranges to the exact expression
\[
 \bigl(\vec x^\T(\beta_k\vec v_{k+1})\bigr)(\vec e_k^\T\vec y)
 =-\vec y^\T\vec N_k\vec y
  -\vec y^\T\up(\vec G_k)\vec y
  +\vec y^\T\up(\vec T_k\vec D_k-\vec D_k\vec T_k)\vec y.
\]
Because \(\vec y\) is a unit vector, each quadratic form on the right is bounded by the operator norm, and hence by the Frobenius norm, of its matrix.
All three matrices are \(k\times k\) with entries of magnitude \(\lesssim_k\eps\norm{\vec A}\): the entries of \(\vec N_k\) are differences of the local products \(p_j\), which \cref{thm:local-errors} controls, and the other two are covered by \cref{lem:gram-commutator}.
Each Frobenius norm is therefore \(\lesssim_k\eps\norm{\vec A}\), and the theorem follows.
\end{proof}

\subsection{Descent, containment, and stabilized Ritz values}

For every \(1\le t\le k\), the tridiagonal \(\vec T_t\) produced after the first \(t\) iterations satisfies the preceding identities and bounds; in particular, \cref{thm:paige} applies to the unit eigenpairs of every \(\vec T_t\).
Paige analyzes some of the consequences of this observation in \cite[\S3]{paige1980}.

\begin{lemma}[Paige descent]
\label{lem:descent}
Let \((\theta,\vec y)\) be a unit eigenpair of \(\vec T_t\).
If \(\abs{\vec y^\T \vec R_t\vec y}>3/8\), then there are \(1\le r\le s<t\) and a unit eigenpair \((\theta_r^{(s)},\vec y_r^{(s)})\) of \(\vec T_s\) such that
\[
 \abs{\theta-\theta_r^{(s)}}\lesssim_k\eps\norm{\vec A},
 \qquad
 \beta_s\abs{\vec e_s^\T\vec y_r^{(s)}}\lesssim_k\eps\norm{\vec A}.
\]
\end{lemma}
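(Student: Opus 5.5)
We plan to expand \(\vec y^\T\vec R_t\vec y\) column by column, rewrite each column's contribution using the eigenpairs of the leading tridiagonals \(\vec T_s\), \(s<t\), and then apply \cref{thm:paige} to each \(\vec T_s\). Column \(s+1\) of \(\vec R_t\) has entries \(\vec v_i^\T\vec v_{s+1}\) for \(i\le s\), so it equals \(\vec V_s^\T\vec v_{s+1}\) padded by zeros. Write \(\vec y_{1:s}\) for the first \(s\) entries of \(\vec y\) and \(y_{s+1}\) for entry \(s+1\). Then
\[
 \vec y^\T\vec R_t\vec y=\sum_{s=1}^{t-1}y_{s+1}\,\vec y_{1:s}^\T\vec V_s^\T\vec v_{s+1}.
\]
Expand \(\vec y_{1:s}=\sum_r \vec y_r^{(s)}\bigl((\vec y_r^{(s)})^\T\vec y_{1:s}\bigr)\) in an orthonormal eigenbasis of \(\vec T_s\), and set \(\vec x_r^{(s)}:=\vec V_s\vec y_r^{(s)}\). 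This gives
\[
 \vec y^\T\vec R_t\vec y=\sum_{s=1}^{t-1}\sum_{r=1}^{s}\tau_{rs},
 \qquad
 \tau_{rs}:=y_{s+1}\bigl((\vec y_r^{(s)})^\T\vec y_{1:s}\bigr)\bigl((\vec x_r^{(s)})^\T\vec v_{s+1}\bigr).
\]
There are at most \(k^2\) terms, so the hypothesis \(\abs{\vec y^\T\vec R_t\vec y}>3/8\) forces some pair \((r,s)\) with \(\abs{\tau_{rs}}\ge 3/(8k^2)\). We fix that pair.

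The key identity comes from the first \(s\) rows of \(\vec T_t\vec y=\theta\vec y\), namely \(\vec T_s\vec y_{1:s}+\beta_s y_{s+1}\vec e_s=\theta\vec y_{1:s}\). Multiplying on the left by \((\vec y_r^{(s)})^\T\) gives
\[
 (\theta-\theta_r^{(s)})\,(\vec y_r^{(s)})^\T\vec y_{1:s}=\beta_s\,y_{s+1}\,\vec e_s^\T\vec y_r^{(s)}.
\]
Substituting this into \(\tau_{rs}\), whenever \(\theta\ne\theta_r^{(s)}\), yields
\[
 (\theta-\theta_r^{(s)})\,\tau_{rs}=y_{s+1}^2\,\eta_{rs},
 \qquad
 \eta_{rs}:=\bigl((\vec x_r^{(s)})^\T\vec v_{s+1}\bigr)\,\beta_s\,\vec e_s^\T\vec y_r^{(s)}.
\]
By \cref{thm:paige} applied to \(\vec T_s\), we have \(\abs{\eta_{rs}}\lesssim_k\eps\norm{\vec A}\). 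Since \(y_{s+1}^2\le1\) and \(\abs{\tau_{rs}}\ge3/(8k^2)\), this gives \(\abs{\theta-\theta_r^{(s)}}\lesssim_k\eps\norm{\vec A}\), which is the first claimed bound.

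For the residual bound we need a lower bound on \(\abs{y_{s+1}}\). Using \(\norm{\vec v_{s+1}}\le\sqrt{1+\eps}\) and \(\norm{\vec V_s}\le\norm{\vec V_s}_F\le\sqrt{k(1+\eps)}\), which follow from \cref{thm:local-errors}, we get \(\abs{(\vec x_r^{(s)})^\T\vec v_{s+1}}\lesssim_k1\). Also \(\abs{(\vec y_r^{(s)})^\T\vec y_{1:s}}\le1\). Together with the lower bound on \(\abs{\tau_{rs}}\), this forces \(\abs{y_{s+1}}\gtrsim k^{-p}\) for some fixed \(p\). The key identity then gives
\[
 \beta_s\abs{\vec e_s^\T\vec y_r^{(s)}}
 =\frac{\abs{\theta-\theta_r^{(s)}}\,\abs{(\vec y_r^{(s)})^\T\vec y_{1:s}}}{\abs{y_{s+1}}}
 \lesssim_k\eps\norm{\vec A}.
\]

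We still need the degenerate case \(\theta=\theta_r^{(s)}\). The key identity then reads \(\beta_s y_{s+1}\vec e_s^\T\vec y_r^{(s)}=0\). Since \(\tau_{rs}\ne0\), we have \(y_{s+1}\ne0\), and \(\beta_s>0\) because \(s<t\le k\) and the internal subdiagonals are positive. Hence \(\vec e_s^\T\vec y_r^{(s)}=0\), so both conclusions hold trivially.

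We expect the main obstacle to be bookkeeping rather than a new idea. The pigeonhole step must select a single pair \((r,s)\) that makes both \(\abs{\theta-\theta_r^{(s)}}\) and \(\abs{y_{s+1}}^{-1}\) controlled at once. We handle this by choosing the pair through the lower bound on \(\abs{\tau_{rs}}\) alone and extracting both bounds from that one inequality. We also need to check the conventions: \(s\) ranges over \(1\le s<t\), all eigenvalues of \(\vec T_s\) may be used, and \cref{thm:paige} applies to every \(\vec T_s\) with \(s<t\), as noted at the start of this subsection.
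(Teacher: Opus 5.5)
Your proof is correct. Its first half coincides with the paper's: the same column-by-column expansion of \(\vec y^\T\vec R_t\vec y\) into the terms \(\tau_{rs}\), the same division-free identity from the first \(s\) rows of \(\vec T_t\vec y=\theta\vec y\), and the same use of \cref{thm:paige} at every level \(s<t\). Your \(\eta_{rs}\) is the paper's Paige product \(P_{rs}\).

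The closing step differs. The paper argues by contrapositive. It sets \(\Psi:=8t\max_{r,s}\abs{P_{rs}}\) and assumes every pair has \(\abs{\theta-\theta_r^{(s)}}>\Psi\) or \(\beta_s\abs{\vec e_s^\T\vec y_r^{(s)}}>\Psi\). It then bounds each \(\abs{\tau_{rs}}\) through whichever exact relation applies, and sums with two Cauchy--Schwarz estimates to reach \(\abs{\vec y^\T\vec R_t\vec y}\le1/4\). You instead pigeonhole onto one term with \(\abs{\tau_{rs}}\ge3/(8k^2)\) and read both bounds off that single pair. This is shorter and needs no separate case \(\max\abs{P_{rs}}=0\). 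It also shows that any pair carrying a non-negligible share of the sum works.

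What the paper's summation buys is a better polynomial factor. Both of its conclusions hold with \(8t\max\abs{P_{rs}}\). Yours gives about \(k^2\) for the displacement and, as written, about \(k^{9/2}\) for the residual estimate; this is harmless under the \(\lesssim_k\) convention.

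Two simplifications are available to you:
\begin{enumerate}[label=(\roman*)]
 \item The relation \((\theta-\theta_r^{(s)})\tau_{rs}=y_{s+1}^2\eta_{rs}\) holds without assuming \(\theta\ne\theta_r^{(s)}\). Your residual formula divides only by \(y_{s+1}\ne0\), so the degenerate case needs no separate treatment.
 \item Multiplying \(\tau_{rs}\) by \(\beta_s\vec e_s^\T\vec y_r^{(s)}\) gives \(\beta_s(\vec e_s^\T\vec y_r^{(s)})\tau_{rs}=\eta_{rs}\,y_{s+1}\,(\vec y_r^{(s)})^\T\vec y_{1:s}\); this is the paper's second relation in \cref{eq:tau-P}. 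Hence \(\beta_s\abs{\vec e_s^\T\vec y_r^{(s)}}\le\abs{\eta_{rs}}/\abs{\tau_{rs}}\) follows directly. You then need neither the lower bound on \(\abs{y_{s+1}}\) nor the bound on \(\norm{\vec V_s}\), and the residual estimate inherits the same \(k^2\) factor as the displacement.
\end{enumerate}
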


\begin{proof}
Put \(\rho:=\vec y^\T \vec R_t\vec y\).
For \(s<t\), write \(\vec y^{[s]}\) for the first \(s\) entries of \(\vec y\), and expand it in an orthonormal eigenbasis \(\vec y_1^{(s)},\ldots,\vec y_s^{(s)}\) of \(\vec T_s\): \(\vec y^{[s]}=\sum_{r=1}^{s} c_r^{(s)}\vec y_r^{(s)}\).
The leading principal \(s\times s\) block of \(\vec T_t\) is \(\vec T_s\), and, by tridiagonality, the only entry of the first \(s\) rows of \(\vec T_t\) outside this block is \(\beta_s\), in position \((s,s+1)\).
The first \(s\) rows of \(\vec T_t\vec y=\theta\vec y\) therefore read
\[
 \vec T_s\vec y^{[s]}+\beta_s(\vec e_{s+1}^\T\vec y)\vec e_s=\theta\vec y^{[s]}.
\]
Multiplying on the left by \((\vec y_r^{(s)})^\T\), and using \((\vec y_r^{(s)})^\T\vec T_s=\theta_r^{(s)}(\vec y_r^{(s)})^\T\) together with \((\vec y_r^{(s)})^\T\vec y^{[s]}=c_r^{(s)}\), gives the division-free identity
\begin{equation}
 (\theta-\theta_r^{(s)})c_r^{(s)}
 =\beta_s(\vec e_s^\T\vec y_r^{(s)})(\vec e_{s+1}^\T\vec y).
 \label{eq:leading-coefficient}
\end{equation}
Since \(\vec R_t\) is the strict upper triangle of \(\vec V_t^\T\vec V_t-\vec I\), its entries are \((\vec R_t)_{ij}=\vec v_i^\T\vec v_j\) for \(i<j\).
Grouping the quadratic form \(\rho\) by the column index \(j\) gives
\begin{align*}
   \rho
 =\sum_{j=2}^{t}\sum_{i=1}^{j-1}(\vec e_i^\T\vec y)(\vec e_j^\T\vec y)\,\vec v_i^\T\vec v_j
 &=\sum_{j=2}^{t}\biggl(\,\sum_{i=1}^{j-1}(\vec e_i^\T\vec y)\vec v_i\biggr)^{\!\T}
   \vec v_j\,(\vec e_j^\T\vec y)
 \\&=\sum_{s=1}^{t-1}\bigl(\vec V_s\vec y^{[s]}\bigr)^\T\vec v_{s+1}\,(\vec e_{s+1}^\T\vec y),
\end{align*}
where the last equality reindexes by \(s=j-1\) and uses \(\sum_{i\le s}(\vec e_i^\T\vec y)\vec v_i=\vec V_s\vec y^{[s]}\).
Substituting the eigenbasis expansion of \(\vec y^{[s]}\) splits each term over the level-\(s\) Ritz vectors \(\vec V_s\vec y_r^{(s)}\):
\begin{equation}
 \rho=\sum_{s=1}^{t-1}\sum_{r=1}^s
 \tau_{rs},
 \qquad
 \tau_{rs}:=
 c_r^{(s)}\bigl((\vec V_s\vec y_r^{(s)})^\T\vec v_{s+1}\bigr)(\vec e_{s+1}^\T\vec y).
 \label{eq:rho-expansion}
\end{equation}
This expansion is \cite[eqs.~(3.18),(3.19)]{paige1980}.
Each factor \((\vec V_s\vec y_r^{(s)})^\T\vec v_{s+1}\) is the orthogonality defect of the Lanczos vector \(\vec v_{s+1}\) against a Ritz vector of an earlier iteration---precisely the quantity that \cref{thm:paige} couples to the corresponding residual estimate.
Introduce the Paige products and their maximum
\begin{equation}
 P_{rs}:=\beta_s
 \bigl((\vec V_s\vec y_r^{(s)})^\T\vec v_{s+1}\bigr)
 (\vec e_s^\T\vec y_r^{(s)}),
 \qquad
 \mathcal E_*:=\max_{1\le s<t}\,\max_{1\le r\le s}\abs{P_{rs}}.
 \label{eq:leading-paige-product}
\end{equation}
Each \(P_{rs}\) is the boundary product of the unit eigenpair \((\theta_r^{(s)},\vec y_r^{(s)})\) of \(\vec T_s\), so \cref{thm:paige} applied to the first \(s\) iterations gives \(\mathcal E_*\lesssim_k\eps\norm{\vec A}\).
The products \(\tau_{rs}\) and \(P_{rs}\) share the factor \((\vec V_s\vec y_r^{(s)})^\T\vec v_{s+1}\): multiplying \(\tau_{rs}\) by \(\theta-\theta_r^{(s)}\) and applying \cref{eq:leading-coefficient}, or multiplying by \(\beta_s(\vec e_s^\T\vec y_r^{(s)})\) and rearranging factors, gives the exact relations
\begin{equation}
 (\theta-\theta_r^{(s)})\,\tau_{rs}
 =P_{rs}(\vec e_{s+1}^\T\vec y)^2,
 \qquad
 \beta_s(\vec e_s^\T\vec y_r^{(s)})\,\tau_{rs}
 =P_{rs}\,c_r^{(s)}(\vec e_{s+1}^\T\vec y).
 \label{eq:tau-P}
\end{equation}
Set \(\Psi:=8t\,\mathcal E_*\lesssim_k\eps\norm{\vec A}\).
We prove the conclusion of the lemma, with both right sides replaced by \(\Psi\), in contrapositive form: assuming that every pair \((r,s)\) satisfies
\[
 \abs{\theta-\theta_r^{(s)}}>\Psi
 \qquad\text{or}\qquad
 \beta_s\abs{\vec e_s^\T\vec y_r^{(s)}}>\Psi,
\]
we show \(\abs{\rho}<3/8\), so that the hypothesis of the lemma fails.

If \(\mathcal E_*=0\), then \(\Psi=0\) and every \(P_{rs}\) vanishes; for each pair, dividing the relation in \cref{eq:tau-P} that corresponds to the assumed strict inequality by its nonzero left factor gives \(\tau_{rs}=0\), so \(\rho=0\) by \cref{eq:rho-expansion}.
We may therefore assume \(\mathcal E_*>0\).
For each summand in \cref{eq:rho-expansion}, either \(\abs{\theta-\theta_r^{(s)}}>\Psi\), in which case the first relation in \cref{eq:tau-P} gives
\[
 \abs{\tau_{rs}}
 =\frac{\abs{P_{rs}}}{\abs{\theta-\theta_r^{(s)}}}(\vec e_{s+1}^\T\vec y)^2
 \le\frac{\mathcal E_*}{\Psi}(\vec e_{s+1}^\T\vec y)^2,
\]
or \(\beta_s\abs{\vec e_s^\T\vec y_r^{(s)}}>\Psi\), in which case the second relation in \cref{eq:tau-P} gives
\[
 \abs{\tau_{rs}}
 =
 \frac{\abs{P_{rs}}}
 {\beta_s\abs{\vec e_s^\T\vec y_r^{(s)}}}
 \abs{c_r^{(s)}}\abs{\vec e_{s+1}^\T\vec y}
 \le\frac{\mathcal E_*}{\Psi}\abs{c_r^{(s)}}\abs{\vec e_{s+1}^\T\vec y}.
\]
Therefore
\[
 \abs\rho
 \le\frac{\mathcal E_*}{\Psi}
 \sum_{s=1}^{t-1}\sum_{r=1}^s
 \bigl((\vec e_{s+1}^\T\vec y)^2+\abs{c_r^{(s)}}\abs{\vec e_{s+1}^\T\vec y}\bigr).
\]
The two sums are bounded separately, freely using \(s\le t\) and \(\norm{\vec y^{[s]}}\le\norm{\vec y}=1\).
First,
\[
 \sum_{s=1}^{t-1}\sum_{r=1}^s (\vec e_{s+1}^\T\vec y)^2
 =\sum_{s=1}^{t-1}s\,(\vec e_{s+1}^\T\vec y)^2
 \le t.
\]
Second, by Cauchy--Schwarz over \(r\) and orthonormality of the eigenbasis,
\[
 \sum_{r=1}^s\abs{c_r^{(s)}}
 \le\sqrt{s}\,\biggl(\sum_{r=1}^s\bigl(c_r^{(s)}\bigr)^2\biggr)^{1/2}
 =\sqrt{s}\,\norm{\vec y^{[s]}}
 \le\sqrt{t},
\]
so Cauchy--Schwarz over \(s\) gives
\[
 \sum_{s=1}^{t-1}\sum_{r=1}^s\abs{c_r^{(s)}}\abs{\vec e_{s+1}^\T\vec y}
 \le\sqrt{t}\sum_{s=1}^{t-1}\abs{\vec e_{s+1}^\T\vec y}
 \le\sqrt{t}\cdot\sqrt{t}
   \biggl(\sum_{s=1}^{t-1}(\vec e_{s+1}^\T\vec y)^2\biggr)^{1/2}
 \le t.
\]
Since \(\mathcal E_*/\Psi=1/(8t)\),
\[
 \abs\rho
 \le\frac{t+t}{8t}
 =\frac14
 <\frac38.
\]
In both cases \(\abs\rho<3/8\), which completes the contrapositive.
\end{proof}

\begin{remark}
Paige reaches the same conclusion by introducing the ratio \((\vec e_{s+1}^\T\vec y)/(\theta-\theta_r^{(s)})\) and bounding a Frobenius norm \cite[eqs.~(3.30)--(3.34)]{paige1980}.
Splitting instead on which of \(\abs{\theta-\theta_r^{(s)}}\) and \(\beta_s\abs{\vec e_s^\T\vec y_r^{(s)}}\) is large keeps every step division-free.
\end{remark}

The next theorem states that every computed Ritz value lies in a slightly enlarged spectral interval of \(\vec A\).
This fact is essential to the analysis of Lanczos-based methods for matrix functions \cite{druskin_knizhnerman1991,knizhnerman1996,musco_musco_sidford2018}; see \cite{chen2024}.

\begin{theorem}[Containment of every computed Ritz value]
\label{thm:containment}
Every \(\theta\in\spec(\vec T_k)\) satisfies
\[
 \dist\bigl(\theta,[\lambda_{\min}(\vec A),\lambda_{\max}(\vec A)]\bigr)
 \lesssim_k\eps\norm{\vec A}.
\]
\end{theorem}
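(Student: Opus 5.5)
Our plan is to argue by strong induction on \(t\), proving that every \(\theta\in\spec(\vec T_t)\) lies within \(\Phi_t\) of \([\lambda_{\min}(\vec A),\lambda_{\max}(\vec A)]\). Here \(\Phi_t\lesssim_k\eps\norm{\vec A}\) is an explicit increasing sequence to be fixed at the end. For a unit eigenpair \((\theta,\vec y)\) of \(\vec T_t\) we split into two cases according to \(\rho:=\vec y^\T\vec R_t\vec y\).

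\emph{Case \(\abs\rho\le3/8\).} In this case the Ritz vector is not small:
\[
 \norm{\vec x}^2=\vec y^\T\vec V_t^\T\vec V_t\vec y=1+\vec y^\T\vec D_t\vec y+2\rho\ge1-\eps-\tfrac34\ge\tfrac15 .
\]
Rather than the residual bound, we use a Rayleigh-quotient argument, which avoids the term \(\beta_t\abs{\vec e_t^\T\vec y}\), since that term need not be small. Multiplying \cref{eq:governing} (at step \(t\)) on the left by \(\vec y^\T\vec V_t^\T\) and on the right by \(\vec y\) gives
\[
 \vec x^\T\vec A\vec x=\theta\norm{\vec x}^2+\bigl(\vec x^\T(\beta_t\vec v_{t+1})\bigr)(\vec e_t^\T\vec y)+\vec x^\T\vec F_t\vec y .
\]
The middle term is exactly the product bounded in \cref{thm:paige}, so it is \(\lesssim_k\eps\norm{\vec A}\). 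The last term is at most \(\norm{\vec x}\norm{\vec F_t}\), and \(\norm{\vec x}\le\norm{\vec V_t}\le\sqrt{t(1+\eps)}\). Hence
\[
 \Bigl|\theta-\frac{\vec x^\T\vec A\vec x}{\norm{\vec x}^2}\Bigr|\le 5\cdot(\text{these terms})\lesssim_k\eps\norm{\vec A}.
\]
The Rayleigh quotient lies in \([\lambda_{\min},\lambda_{\max}]\), so \(\theta\) is within a base quantity \(\Phi^{\rm base}\lesssim_k\eps\norm{\vec A}\) of the interval. This bound is uniform in \(t\).

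\emph{Case \(\abs\rho>3/8\).} Here \cref{lem:descent} supplies \(s<t\) and an eigenvalue \(\theta_r^{(s)}\) of \(\vec T_s\) with \(\abs{\theta-\theta_r^{(s)}}\le\Psi\lesssim_k\eps\norm{\vec A}\). By the induction hypothesis, \(\theta_r^{(s)}\) is within \(\Phi_s\) of the interval, so \(\theta\) is within \(\Phi_{t-1}+\Psi\). We therefore set \(\Phi_t:=\Phi^{\rm base}+(t-1)\Psi\), where \(\Psi\) is a common upper bound valid for all \(t\le k\). The base case \(t=1\) falls into the first case automatically, since \(\vec R_1=\vec0\). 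Unrolling the recursion gives \(\Phi_k\le\Phi^{\rm base}+k\Psi\lesssim_k\eps\norm{\vec A}\), because the induction has depth at most \(k\) and each level adds only a polynomial-in-\(k\) multiple of \(\eps\norm{\vec A}\).

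\emph{Main obstacle.} The delicate point is keeping the hidden constants uniform through the induction: each \(\lesssim_k\) must be read as a fixed bound \(ck^d\eps\norm{\vec A}\) with \(k\) the total run length, so that the accumulation over \(t\le k\) levels costs only one extra factor of \(k\). A second check is that the first case genuinely uses \cref{thm:paige}, not \cref{eq:ritz-residual-bound}. This matters because the residual estimate \(\beta_t\abs{\vec e_t^\T\vec y}\) can be of order \(\norm{\vec A}\) for unconverged Ritz values. The Rayleigh-quotient identity sidesteps this, since the boundary term enters only through its inner product with \(\vec x\).
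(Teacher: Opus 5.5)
Your proof is correct and follows the paper's argument. It uses the same split on \(\rho=\vec y^\T\vec R_t\vec y\), the same Rayleigh-quotient identity combined with \cref{thm:paige} and \(\norm{\vec x}^2\ge1/5\) in the first case, and \cref{lem:descent} in the second. Your strong induction with \(\Phi_t=\Phi^{\mathrm{base}}+(t-1)\Psi\) only repackages the paper's explicit descent chain, which sums at most \(k-1\) displacements with uniform constants; your looser bound \(5\sqrt{t(1+\eps)}\norm{\vec F_t}\), versus the paper's \(\sqrt5\norm{\vec F_k}\), makes no difference.
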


\begin{remark}
  Paige writes this bound \cite[Eq. 3.48]{paige1980}, but does not include a proof. 
  Many papers and books reference \cite[Eq. 3.48]{paige1980} but also do not provide the proof.
  In fact, the only proof of this result of which we are aware of is \cite[Theorem A.1]{paige2019} via an entirely different argument.
\end{remark}

\begin{proof}
Let \(\vec T_k\vec y=\theta\vec y\), \(\norm{\vec y}=1\), put \(\vec x=\vec V_k\vec y\), and set \(\rho=\vec y^\T \vec R_k\vec y\).
Suppose first that \(\abs\rho\le3/8\); we show the Ritz vector cannot then be too short.
Expanding \(\vec V_k^\T\vec V_k\) via the Gram splitting \cref{eq:gram-split}, and using \(\vec y^\T\vec R_k^\T\vec y=\vec y^\T\vec R_k\vec y=\rho\),
\[
 \norm{\vec x}^2
 =\vec y^\T\vec V_k^\T\vec V_k\vec y
 =\vec y^\T\bigl(\vec I+\vec R_k^\T+\vec D_k+\vec R_k\bigr)\vec y
 =1+2\rho+\vec y^\T \vec D_k\vec y.
\]
The diagonal term satisfies \(\abs{\vec y^\T\vec D_k\vec y}\le\max_{j\le k}\abs{g_j}\le\eps\) by \cref{thm:local-errors}, so
\[
 \norm{\vec x}^2
 \ge1-\tfrac34-\eps
 \ge\tfrac15,
\]
using \(\eps\le1/20\), as ensured by \cref{ass:small-eps}.
Premultiply \cref{eq:ritz-residual-identity} by \(\vec x^\T\).
The Rayleigh quotient \(\vec x^\T \vec A\vec x/\norm{\vec x}^2\) lies in the spectral interval of \(\vec A\), so
\[
 \begin{aligned}
 \dist\bigl(\theta,[\lambda_{\min}(\vec A),\lambda_{\max}(\vec A)]\bigr)
 &\le
 \frac{\abs{\vec x^\T(\beta_k\vec v_{k+1})
                   (\vec e_k^\T\vec y)}}{\norm{\vec x}^2}
 +\frac{\abs{\vec x^\T\vec F_k\vec y}}{\norm{\vec x}^2}\\
 &\le5\,\abs{\bigl(\vec x^\T(\beta_k\vec v_{k+1})\bigr)(\vec e_k^\T\vec y)}
 +\sqrt5\norm{\vec F_k}
 \lesssim_k\eps\norm{\vec A},
 \end{aligned}
\]
by \cref{thm:paige} and \(\norm{\vec F_k}\le\sqrt{k}\,\max_{j\le k}\norm{\vec f_j}\lesssim_k\eps\norm{\vec A}\) from \cref{thm:local-errors}.

If \(\abs\rho>3/8\), we descend to an earlier iteration at which the first branch applies.
Set \(t_0:=k\) and \((\theta^{(t_0)},\vec y^{(t_0)}):=(\theta,\vec y)\), and iterate the following step.
Given a unit eigenpair \((\theta^{(t_i)},\vec y^{(t_i)})\) of \(\vec T_{t_i}\) with \(\abs{(\vec y^{(t_i)})^\T\vec R_{t_i}\vec y^{(t_i)}}>3/8\), \cref{lem:descent} applied to \(\vec T_{t_i}\) produces an index \(t_{i+1}<t_i\) and a unit eigenpair \((\theta^{(t_{i+1})},\vec y^{(t_{i+1})})\) of \(\vec T_{t_{i+1}}\) with
\[
 \abs{\theta^{(t_i)}-\theta^{(t_{i+1})}}
 \lesssim_k\eps\norm{\vec A}.
\]
The process stops at the first index \(m\) with \(\abs{(\vec y^{(t_m)})^\T\vec R_{t_m}\vec y^{(t_m)}}\le3/8\).
It must stop: the indices \(k=t_0>t_1>\cdots\ge1\) strictly decrease, and \(\vec R_1\), the strict upper triangle of a \(1\times1\) matrix, is zero, so the stopping condition holds at \(t=1\) at the latest; in particular, \(m\le k-1\).

Every displacement above is an instance of the same bound in \cref{lem:descent}, so all carry the same absolute constants \(c\) and \(d\): each step satisfies \(\abs{\theta^{(t_i)}-\theta^{(t_{i+1})}}\le ck^d\eps\norm{\vec A}\).
The triangle inequality over the at most \(k-1\) steps therefore gives
\[
 \abs{\theta-\theta^{(t_m)}}
 \le\sum_{i=0}^{m-1}\abs{\theta^{(t_i)}-\theta^{(t_{i+1})}}
 \le(k-1)\,ck^d\eps\norm{\vec A}
 \lesssim_k\eps\norm{\vec A}.
\]
The stopping pair satisfies the hypothesis of the first branch, applied to the first \(t_m\) iterations, so \(\dist(\theta^{(t_m)}, [\lambda_{\min}(\vec A),\lambda_{\max}(\vec A)]) \lesssim_k\eps\norm{\vec A}\).
Adding the two displacements proves the theorem.
\end{proof}

\Cref{thm:containment} places every computed Ritz value near the spectral \emph{interval} of \(\vec A\).
The final theorem of this section sharpens this to the spectrum itself for Ritz values whose scalar residual estimate \(\beta_k\abs{\vec e_k^\T\vec y}\) is small (in Paige's terminology, Ritz values that have \emph{stabilized}) \cite{paige1980}.
In exact arithmetic such a bound would follow at once from \cref{eq:ritz-residual-bound}; in finite precision that route divides by \(\norm{\vec x}\), which loss of orthogonality can make small.
Paige's treatment of stabilized and clustered Ritz values was subsequently refined by W\"{u}lling \cite{wulling2005}; we do not pursue such refinements here.

\begin{theorem}[Localization of stabilized Ritz values]
\label{thm:stabilized}
Let \((\theta,\vec y)\) be a unit eigenpair of \(\vec T_k\).
Then
\[
 \dist(\theta,\spec(\vec A))
 \lesssim_k \beta_k\abs{\vec e_k^\T\vec y}+\eps\norm{\vec A}.
\]
In particular, if the recurrence terminates at step \(k\), so \(\beta_k=0\), every eigenvalue of \(\vec T_k\) satisfies \(\dist(\theta,\spec(\vec A))\lesssim_k\eps\norm{\vec A}\).
\end{theorem}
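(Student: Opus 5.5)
The plan is to mirror the structure of the proof of \cref{thm:containment}: a direct argument when the Ritz vector is not too short, and a descent argument via \cref{lem:descent} otherwise. The new ingredient is that the descent must also carry the residual estimate. At the stopping level we need a \emph{small} residual estimate, not just a nearby Ritz value.

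\textbf{Direct branch.} Let \(\rho=\vec y^\T\vec R_k\vec y\). If \(\abs\rho\le3/8\), the computation in the proof of \cref{thm:containment} gives \(\norm{\vec x}^2\ge1/5\). Since \(\vec A\) is symmetric, the residual bound \cref{eq:ritz-residual-bound} then yields
\[
 \dist(\theta,\spec(\vec A))\le\sqrt5\bigl(\sqrt{1+\eps}\,\beta_k\abs{\vec e_k^\T\vec y}+\norm{\vec F_k}\bigr).
\]
By \cref{thm:local-errors}, \(\norm{\vec F_k}\lesssim_k\eps\norm{\vec A}\), so this branch gives the claim.

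\textbf{Descent branch.} If \(\abs\rho>3/8\), I apply \cref{lem:descent} to obtain \(s<k\) and a unit eigenpair \((\theta_r^{(s)},\vec y_r^{(s)})\) of \(\vec T_s\) with
\[
 \abs{\theta-\theta_r^{(s)}}\lesssim_k\eps\norm{\vec A},
 \qquad
 \beta_s\abs{\vec e_s^\T\vec y_r^{(s)}}\lesssim_k\eps\norm{\vec A}.
\]
The new pair is thus a stabilized Ritz pair at an earlier level, with residual estimate already of order \(\eps\norm{\vec A}\). I then reapply the whole dichotomy to this pair.
\begin{itemize}
\item If its \(\rho\) has magnitude at most \(3/8\), the direct branch at level \(s\) gives \(\dist(\theta_r^{(s)},\spec(\vec A))\lesssim_k\eps\norm{\vec A}\).
\item Otherwise, \cref{lem:descent} applies again at level \(s\), producing an even earlier pair.
\end{itemize}
As in \cref{thm:containment}, the levels strictly decrease and \(\vec R_1=\vec0\), so the chain stops after at most \(k-1\) steps. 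Each step moves the Ritz value by at most \(ck^d\eps\norm{\vec A}\) with the same constants. Summing these displacements and adding the direct-branch bound at the stopping pair gives \(\dist(\theta,\spec(\vec A))\lesssim_k\eps\norm{\vec A}\), which is even stronger than claimed.

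\textbf{Combining and the termination case.} The two branches together give the bound \(\lesssim_k\beta_k\abs{\vec e_k^\T\vec y}+\eps\norm{\vec A}\). If the recurrence terminates at step \(k\), then \(\beta_k=0\), and the second statement follows immediately.

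\textbf{Expected obstacle.} The main point to check is that the direct branch really uses only the short-vector bound and \cref{eq:ritz-residual-bound} at an intermediate level \(t\). That is, \cref{eq:ritz-residual-bound} must hold with \(\beta_t\vec v_{t+1}\) and \(\vec F_t\) for every \(t\le k\), which is exactly the observation made at the start of the subsection. One also has to confirm that the residual estimate of the intermediate pair (supplied by \cref{lem:descent}) enters only in the stopping branch, where it is already \(\lesssim_k\eps\norm{\vec A}\). Interestingly, in the descent branch the original residual estimate \(\beta_k\abs{\vec e_k^\T\vec y}\) is never needed.
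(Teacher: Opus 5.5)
Your proposal is correct and follows essentially the same route as the paper. In both, you descend via \cref{lem:descent} until the Ritz vector satisfies \(\norm{\widehat{\vec x}}^2\ge1/5\). The final pair's residual estimate is either the original \(\beta_k\abs{\vec e_k^\T\vec y}\) (if no descent occurred) or \(\lesssim_k\eps\norm{\vec A}\) (supplied by the lemma). You then apply \cref{eq:ritz-residual-bound} at that level, using \(\norm{\vec F_t}\le\norm{\vec F_k}\), and add the accumulated displacement.
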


\begin{proof}
Put \(r:=\beta_k\abs{\vec e_k^\T\vec y}\).
Starting from \((\theta,\vec y)\), apply \cref{lem:descent} whenever \(\abs{\vec y^\T \vec R_t\vec y}>3/8\).
The iteration number strictly decreases, so after at most \(k-1\) steps the process reaches an eigenpair \((\widehat\theta,\widehat{\vec y})\) of some \(\vec T_t\) whose Ritz vector \(\widehat{\vec x}=\vec V_t\widehat{\vec y}\) has \(\norm{\widehat{\vec x}}^2\ge1/5\).
As in the proof of \cref{thm:containment}, the accumulated displacement satisfies \(\abs{\theta-\widehat\theta}\lesssim_k\eps\norm{\vec A}\).
The residual estimate of the final pair is \(r\) if no descent occurred, and otherwise is \(\lesssim_k\eps\norm{\vec A}\) by \cref{lem:descent}; in either case it is \(\lesssim_k r+\eps\norm{\vec A}\).
Since \(\vec F_t\) consists of the first \(t\) columns of \(\vec F_k\),
\(\norm{\vec F_t}\le\norm{\vec F_k}\le\sqrt{k}\,\max_{j\le k}\norm{\vec f_j}\lesssim_k\eps\norm{\vec A}\).
Applying \cref{eq:ritz-residual-bound} to the final pair gives
\[
 \dist(\widehat\theta,\spec(\vec A))
 \le\frac{\norm{(\vec A-\widehat\theta \vec I)\widehat{\vec x}}}
          {\norm{\widehat{\vec x}}}
 \lesssim_k r+\eps\norm{\vec A}.
\]
Adding the accumulated displacement proves the theorem; at termination \(r=0\).
\end{proof}

\section{Greenbaum's theory}
\label{sec:greenbaum}

We now show that the computed \(\vec T_k\) is the exact output of \(k\) steps of Lanczos applied to a symmetric matrix that is close to an orthogonal copy of \(\vec I_k\otimes\vec A\), a backward interpretation of the kind introduced by Greenbaum in \cite{greenbaum1989}.
Our construction differs from Greenbaum's.
Whereas Greenbaum builds the model by extending the computed tridiagonal matrix, continuing the recurrence in exact arithmetic and tracking where the extended eigenvalues fall \cite[\S\S5--7]{greenbaum1989}, we instead orthogonalize the computed Lanczos vectors exactly, by a dilation into \(k\) stacked copies of \(\R^n\), and then complete the dilated block to a symmetric matrix.
The dilation has three payoffs: the model is built in closed form, in dimension exactly \(nk\), with no recurrence to continue; its perturbation is of the same order \(\eps\norm{\vec A}\) as the local error bounds (Greenbaum's bound depends on the fourth-root of the accuracy parameter \cite[eq.~(7.10)]{greenbaum1989}); and the starting spectral weights are preserved exactly, so that all error is concentrated in a single perturbation bound.
Moreover, the construction does not assume that \(\vec V_k\) is well-conditioned, that \(k\) precedes exact breakdown for \((\vec A,\vec v_1)\), or that the spectral measure is regular.

\subsection{Exact orthogonalization by a nilpotent dilation}

The construction takes as input the governing recurrence in \cref{eq:governing} together with the local error bounds of \cref{thm:local-errors}; nothing else about the run is used.
It proceeds in three steps that mirror the structure of \cref{sec:paige}.
First, we rescale the computed Lanczos vectors to have exactly unit norm and check that the local error bounds survive, in parallel with \cref{thm:local-errors}.
Second, we bound a commutator built from the strict upper triangle of the normalized Gram matrix, in parallel with \cref{lem:gram-commutator}.
Third, we orthogonalize exactly: the normalized vectors and the Gram triangle assemble into an isometry into a space of \(k\) stacked copies of \(\R^n\), on which the recurrence is reproduced up to a residual of size \(\lesssim_k\eps\norm{\vec A}\).

Throughout this subsection, suppose \cref{alg:lanczos} completes \(k\) iterations under the assumptions of \cref{sec:model}.
Let \(d_j:=\norm{\vec v_j}\), \(\vec D:=\diag(d_1,\ldots,d_k)\), and \(\bar{\vec V}:=\vec V_k\vec D^{-1}\), so the columns \(\bar{\vec v}_j=\vec v_j/d_j\) of \(\bar{\vec V}\) are exactly unit vectors.
Write \(\beta_k\vec v_{k+1}/d_k=\bar\beta_k\bar{\vec v}_{k+1}\), where \(\bar\beta_k\ge0\) and \(\bar{\vec v}_{k+1}\) is a unit vector, chosen arbitrarily if \(\beta_k\vec v_{k+1}=\vec0\).
In this basis the governing recurrence in \cref{eq:governing} reads
\begin{equation}
 \vec A\bar{\vec V}=\bar{\vec V}\vec T_k+\bar\beta_k\bar{\vec v}_{k+1}\vec e_k^\T+\vec E,
 \label{eq:normalized-interface}
\end{equation}
which defines the normalized residual \(\vec E\).
Finally, in parallel with \cref{eq:gram-split}, decompose the normalized Gram matrix as
\begin{equation}
 \vec G:=\bar{\vec V}^\T \bar{\vec V}=\vec I+\vec U+\vec U^\T,
 \qquad
 \vec a:=\bar{\vec V}^\T\bar{\vec v}_{k+1},
 \label{eq:normalized-gram}
\end{equation}
where \(\vec U\) is strictly upper triangular; the diagonal is exactly the identity because the columns are exactly unit.

The first step is the analogue of \cref{thm:local-errors} for the normalized quantities.

\begin{lemma}[Normalized local errors]
\label{lem:normalized-errors}
The normalized residual satisfies \(\norm{\vec E}\lesssim_k\eps\norm{\vec A}\), and the normalized weighted adjacent products satisfy
\[
 \beta_j\abs{\bar{\vec v}_j^\T\bar{\vec v}_{j+1}}\lesssim_k\eps\norm{\vec A}
 \quad(1\le j<k),
 \qquad
 \bar\beta_k\abs{\bar{\vec v}_k^\T\bar{\vec v}_{k+1}}\lesssim_k\eps\norm{\vec A}.
\]
\end{lemma}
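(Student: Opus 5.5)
We express \(\vec E\) explicitly in terms of the unnormalized residual \(\vec F_k\) and then bound each piece with \cref{thm:local-errors,lem:coefficients}. The only new ingredient is that each \(d_j=\sqrt{1+g_j}\) satisfies \(\abs{d_j-1}\le\eps\). Multiplying \cref{eq:governing} on the right by \(\vec D^{-1}\) gives
\[
 \vec A\bar{\vec V}=\vec V_k\vec T_k\vec D^{-1}+\beta_k\vec v_{k+1}\vec e_k^\T\vec D^{-1}+\vec F_k\vec D^{-1}.
\]
Since \(\vec e_k^\T\vec D^{-1}=d_k^{-1}\vec e_k^\T\), the boundary term is exactly \(\bar\beta_k\bar{\vec v}_{k+1}\vec e_k^\T\). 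Writing \(\vec V_k=\bar{\vec V}\vec D\), we obtain
\[
 \vec E=\bar{\vec V}(\vec D\vec T_k\vec D^{-1}-\vec T_k)+\vec F_k\vec D^{-1}.
\]

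We bound the two terms separately. The matrix \(\vec D\vec T_k\vec D^{-1}-\vec T_k\) is tridiagonal with zero diagonal. Its off-diagonal entries are \(\beta_j(d_j/d_{j+1}-1)\) and \(\beta_j(d_{j+1}/d_j-1)\). Because \(\abs{d_i-1}\le\eps\) and \(\eps\le1/20\), each ratio differs from \(1\) by at most \(3\eps\). Combined with \(\beta_j\lesssim_k\norm{\vec A}\) from \cref{lem:coefficients}, every entry is \(\lesssim_k\eps\norm{\vec A}\), so the Frobenius norm of this \(k\times k\) matrix is \(\lesssim_k\eps\norm{\vec A}\). The factor \(\bar{\vec V}\) has unit columns, so \(\norm{\bar{\vec V}}\le\sqrt k\); this is crude, but polynomial in \(k\), which is all we need. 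For the second term, \(\norm{\vec F_k\vec D^{-1}}\le(1-\eps)^{-1/2}\sqrt k\max_j\norm{\vec f_j}\lesssim_k\eps\norm{\vec A}\) by \cref{thm:local-errors}. Together these give \(\norm{\vec E}\lesssim_k\eps\norm{\vec A}\).

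For the adjacent products with \(j<k\), we have \(\beta_j\bar{\vec v}_j^\T\bar{\vec v}_{j+1}=p_j/(d_jd_{j+1})\). Since \(d_jd_{j+1}\ge1-\eps\), this is \(\lesssim_k\eps\norm{\vec A}\) by the bound on \(p_j\) in \cref{thm:local-errors}. The boundary product needs slightly more care, because \(\bar{\vec v}_{k+1}\) is defined through the combination \(\beta_k\vec v_{k+1}\) rather than through \(\vec v_{k+1}\) alone. Here \(\bar\beta_k\bar{\vec v}_{k+1}=\beta_k\vec v_{k+1}/d_k\), so
\[
 \bar\beta_k\,\bar{\vec v}_k^\T\bar{\vec v}_{k+1}=\frac{\vec v_k^\T(\beta_k\vec v_{k+1})}{d_k^2}=\frac{p_k}{1+g_k}.
\]
This identity holds whether or not the step is terminal. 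In the terminal case both sides vanish by the convention \(\beta_k\vec v_{k+1}=\vec0\), regardless of the arbitrary choice of \(\bar{\vec v}_{k+1}\). Hence the boundary product is also \(\lesssim_k\eps\norm{\vec A}\).

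I expect the main point needing care to be the first term of \(\vec E\): one must note that the diagonal of \(\vec D\vec T_k\vec D^{-1}-\vec T_k\) cancels exactly, and that the off-diagonal entries carry a factor \(\beta_j\). This factor is controlled only through \cref{lem:coefficients}, so it is the a priori envelope, rather than any orthogonality of the computed vectors, that makes the bound work.
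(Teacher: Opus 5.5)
Your proposal is correct and follows the paper's proof essentially step for step. It uses the same identity \(\vec E=\bar{\vec V}(\vec D\vec T_k\vec D^{-1}-\vec T_k)+\vec F_k\vec D^{-1}\), the same crude bound \(\norm{\bar{\vec V}}\le\sqrt k\), and the same rewriting of the adjacent products as \(p_j/(d_jd_{j+1})\) and \(p_k/d_k^2\). The only cosmetic difference is that you bound \(\vec D\vec T_k\vec D^{-1}-\vec T_k\) entrywise, relying on its diagonal cancelling. The paper instead splits it as \((\vec D-\vec I)\vec T_k\vec D^{-1}+\vec T_k(\vec D^{-1}-\vec I)\) and uses \(\norm{\vec T_k}\lesssim_k\norm{\vec A}\), so the diagonal cancellation is not actually needed there.
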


\begin{proof}
Since \(d_j^2=1+g_j\), the bound \(\abs{g_j}\le\eps\) of \cref{thm:local-errors} gives \(\norm{\vec D-\vec I}\le\eps\); moreover, \(\eps\le3/4\) by \cref{ass:small-eps}, so \(d_j^2\ge1-\eps\ge1/4\) and \(\norm{\vec D^{-1}}\le2\).
Combining the two, \(\norm{\vec D^{-1}-\vec I}=\norm{\vec D^{-1}(\vec I-\vec D)}\le2\eps\).
Substituting \(\vec V_k=\bar{\vec V}\vec D\) into \cref{eq:governing} and right multiplying by \(\vec D^{-1}\) identifies the residual in \cref{eq:normalized-interface} as
\[
 \vec E=\bar{\vec V}(\vec D\vec T_k\vec D^{-1}-\vec T_k)
       +\vec F_k\vec D^{-1}.
\]
Since \(\norm{\vec T_k}\lesssim_k\norm{\vec A}\) by \cref{lem:coefficients},
\[
 \norm{\vec D\vec T_k\vec D^{-1}-\vec T_k}
 \le\norm{(\vec D-\vec I)\vec T_k\vec D^{-1}}
 +\norm{\vec T_k(\vec D^{-1}-\vec I)}
 \lesssim_k\eps\norm{\vec A},
\]
while \(\norm{\vec F_k\vec D^{-1}}\le2\sqrt{k}\,\max_{1\le j\le k}\norm{\vec f_j}\lesssim_k\eps\norm{\vec A}\) by \cref{thm:local-errors}.
Since \(\bar{\vec V}\) has unit columns, \(\norm{\bar{\vec V}}\le\sqrt{k}\), and hence, combined with the two preceding bounds, \(\norm{\vec E}\lesssim_k\eps\norm{\vec A}\).

Up to the normalization scalars, each adjacent product is a weighted adjacent product from \cref{eq:derived-quantities},
\[
 \beta_j\bar{\vec v}_j^\T\bar{\vec v}_{j+1}
 =\frac{p_j}{d_jd_{j+1}},
 \qquad
 \bar\beta_k\,\bar{\vec v}_k^\T\bar{\vec v}_{k+1}
 =\frac{p_k}{d_k^2},
\]
and \(d_j\ge1/2\), so both are \(\lesssim_k\eps\norm{\vec A}\) by \cref{thm:local-errors}.
\end{proof}

The second step bounds a commutator built from \(\vec U\), in parallel with the upper-triangular identity in \cref{lem:gram-commutator}: there the strict upper triangle \(\vec R_k\) of the Gram defect satisfied a commutator relation driven by the local errors, and here \(\vec U\) plays the role of \(\vec R_k\).

\begin{lemma}[Normalized Gram commutator]
\label{lem:normalized-commutator}
The matrix \(\vec T_k\vec U-\vec U\vec T_k-\bar\beta_k\vec a\vec e_k^\T\) is upper triangular, and its norm is \(\lesssim_k\eps\norm{\vec A}\).
\end{lemma}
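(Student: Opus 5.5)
We will mirror the proof of \cref{lem:gram-commutator}, now starting from the normalized relation \cref{eq:normalized-interface}.

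\textbf{Step 1: a symmetric identity.} Premultiplying \cref{eq:normalized-interface} by \(\bar{\vec V}^\T\) gives
\[
 \bar{\vec V}^\T\vec A\bar{\vec V}=\vec G\vec T_k+\bar\beta_k\vec a\vec e_k^\T+\bar{\vec V}^\T\vec E.
\]
The left side is symmetric, so equating the right side with its transpose yields
\[
 \vec T_k\vec G-\vec G\vec T_k=\bar\beta_k(\vec a\vec e_k^\T-\vec e_k\vec a^\T)+\bar{\vec V}^\T\vec E-\vec E^\T\bar{\vec V}.
\]
Since \(\norm{\bar{\vec V}}\le\sqrt k\), \cref{lem:normalized-errors} shows that the last term has norm \(\lesssim_k\eps\norm{\vec A}\).

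\textbf{Step 2: split \(\vec G\).} Write \(\vec G=\vec I+\vec U+\vec U^\T\); the identity commutes with \(\vec T_k\) and drops out. Set
\[
 \vec M:=\vec T_k\vec U-\vec U\vec T_k-\bar\beta_k\vec a\vec e_k^\T.
\]
By the tridiagonal and strict-upper-triangular structure, \(\vec T_k\vec U-\vec U\vec T_k\) vanishes below the first subdiagonal. Its subdiagonal entries are
\[
 (\vec T_k\vec U)_{j+1,j}-(\vec U\vec T_k)_{j+1,j}=\beta_j\vec U_{j+1,j}-\vec U_{j+1,j+1}\beta_j=0,
\]
since \(\vec U_{j+1,j}=0\) and \(\vec U_{j+1,j+1}=0\). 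It is worth double-checking this computation, but the conclusion is that \(\vec T_k\vec U-\vec U\vec T_k\) is upper triangular. Because \(\bar\beta_k\vec a\vec e_k^\T\) is supported in the last column, it is upper triangular as well. Hence \(\vec M\) is upper triangular, which proves the first claim.

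\textbf{Step 3: bound \(\vec M\) via its diagonal and strict upper part.} Take the strict upper triangle of the Step 1 identity. The contribution of \(\vec T_k\vec U^\T-\vec U^\T\vec T_k\) there is the transpose of the strict lower part of \(\vec T_k\vec U-\vec U\vec T_k\) (up to sign). That strict lower part is zero by Step 2, so this contribution vanishes. The strict upper part of \(-\bar\beta_k\vec e_k\vec a^\T\) is also zero, since this matrix lives in the last row. Therefore
\[
 \up(\vec M)=\up\bigl(\bar{\vec V}^\T\vec E-\vec E^\T\bar{\vec V}\bigr),
\]
which has norm \(\lesssim_k\eps\norm{\vec A}\); the Frobenius norm of a triangle is at most that of the full matrix.

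It remains to handle the diagonal, exactly as in \cref{lem:gram-commutator}. We compute \((\vec T_k\vec U)_{jj}=\beta_{j-1}\bar{\vec v}_{j-1}^\T\bar{\vec v}_j\) and \((\vec U\vec T_k)_{jj}=\beta_j\bar{\vec v}_j^\T\bar{\vec v}_{j+1}\) for \(j<k\). For \(j=k\) the latter is absent, and it is replaced by \(-\bar\beta_k a_k=-\bar\beta_k\bar{\vec v}_k^\T\bar{\vec v}_{k+1}\) coming from the subtracted rank-one term. Every diagonal entry is thus a difference of the weighted adjacent products, each \(\lesssim_k\eps\norm{\vec A}\) by \cref{lem:normalized-errors}. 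The diagonal part therefore has norm \(\lesssim_k\eps\norm{\vec A}\).

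Adding the diagonal and strict upper parts bounds \(\norm{\vec M}\). The main obstacle is the bookkeeping in Steps 2–3: checking that no lower-triangular remnant appears, and that the \((k,k)\) entry is exactly cancelled or controlled by the \(\vec a\) term. Unlike \cref{lem:gram-commutator}, no \(\vec D_k\) term appears, because the normalized Gram diagonal is exactly the identity.
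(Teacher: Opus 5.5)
Your proof is correct and is essentially the paper's argument: the paper records your Step 3 as the identity \(\vec M-\vec M^\T=\bar{\vec V}^\T\vec E-\vec E^\T\bar{\vec V}\), which is equivalent to your equality of strict upper triangles, and it bounds the diagonal by the same differences of adjacent products from \cref{lem:normalized-errors}. The one blemish is the subdiagonal expansion in Step 2, which mismatches indices: for instance, \(\beta_j=(\vec T_k)_{j+1,j}\) multiplies \(\vec U_{jj}\), not \(\vec U_{j+1,j}\). This is harmless, because every term contributing to a below-diagonal entry of either product pairs an in-band entry of \(\vec T_k\) with an entry of \(\vec U\) on or below the diagonal, so \(\vec T_k\vec U\) and \(\vec U\vec T_k\) are each upper triangular and your conclusion stands.
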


\begin{proof}
Put \(\vec M:=\vec T_k\vec U-\vec U\vec T_k-\bar\beta_k\vec a\vec e_k^\T\).
Tridiagonality of \(\vec T_k\) and strict upper triangularity of \(\vec U\) make \(\vec M\) upper triangular.
Left multiplication of \cref{eq:normalized-interface} by \(\bar{\vec V}^\T\) gives
\[
 \bar{\vec V}^\T\vec A\bar{\vec V}
 =\vec G\vec T_k+\bar\beta_k\vec a\vec e_k^\T+\bar{\vec V}^\T\vec E.
\]
The left side is symmetric, so the right side equals its own transpose:
\[
 \vec G\vec T_k+\bar\beta_k\vec a\vec e_k^\T+\bar{\vec V}^\T\vec E
 =\vec T_k\vec G+\bar\beta_k\vec e_k\vec a^\T+\vec E^\T\bar{\vec V}.
\]
Substituting \(\vec G=\vec I+\vec U+\vec U^\T\), cancelling the \(\vec T_k\) terms coming from the identity, and collecting the remaining terms into \(\vec M\) and \(\vec M^\T\) gives
\begin{equation}
 \vec M-\vec M^\T=\bar{\vec V}^\T \vec E-\vec E^\T \bar{\vec V},
 \label{eq:normalized-commutator}
\end{equation}
which controls the strict upper triangle of \(\vec M\).
Its diagonal entries are
\[
 \beta_{i-1}\bar{\vec v}_{i-1}^\T\bar{\vec v}_i
 -\beta_i\bar{\vec v}_i^\T\bar{\vec v}_{i+1}\quad(i<k),
 \qquad
 \beta_{k-1}\bar{\vec v}_{k-1}^\T\bar{\vec v}_k
 -\bar\beta_k\bar{\vec v}_k^\T\bar{\vec v}_{k+1},
\]
with absent endpoint terms omitted; each is \(\lesssim_k\eps\norm{\vec A}\) by the adjacent-product bounds of \cref{lem:normalized-errors}.
Since \(\vec M^\T\) is lower triangular, every strict upper entry of \(\vec M\) equals the corresponding entry of \(\vec M-\vec M^\T\), and reading that entry off \cref{eq:normalized-commutator} gives, for \(i<j\),
\[
 \abs{(\vec M)_{ij}}
 =\abs{\bar{\vec v}_i^\T(\vec E\vec e_j)-(\vec E\vec e_i)^\T\bar{\vec v}_j}
 \le2\norm{\vec E}
 \lesssim_k\eps\norm{\vec A},
\]
because \(\bar{\vec v}_i\) and \(\vec e_j\) are unit vectors, and \(\norm{\vec E}\lesssim_k\eps\norm{\vec A}\) by \cref{lem:normalized-errors}.
Since \(\vec M\) is \(k\times k\) and upper triangular, these entrywise bounds give \(\norm{\vec M}\lesssim_k\eps\norm{\vec A}\).
\end{proof}

The third step contains no error analysis at all; it is exact linear algebra, using only that \(\bar{\vec V}\) has unit columns and \(\bar{\vec v}_{k+1}\) is a unit vector.
Where Paige used the triangle \(\vec R_k\) of the Gram defect to track the loss of orthogonality, we use the triangle \(\vec U\) to undo it: setting
\begin{equation}
 \vec C:=(\vec I+\vec U)^{-1},
 \qquad
 \vec K:=\vec I-\vec C,
 \qquad
 \vec s:=\vec C\vec a,
 \qquad
 \vec y:=\bar{\vec v}_{k+1}-\bar{\vec V}\vec s,
 \label{eq:CKsy}
\end{equation}
the recombination \(\bar{\vec V}\vec C\) of the columns of \(\bar{\vec V}\) fails to be an isometry only by the auxiliary block \(\vec K\).

\begin{lemma}[Nilpotent isometry]
\label{lem:isometry}
The matrix \(\vec K\) is strictly upper triangular, so \(\vec K^k=\vec0\); moreover \(\norm{\vec K}\le1\) and \(\norm{\vec C}\le2\).
In addition,
\[
 (\bar{\vec V}\vec C)^\T(\bar{\vec V}\vec C)=\vec I-\vec K^\T\vec K,
 \qquad
 (\bar{\vec V}\vec C)^\T\vec y=-\vec K^\T\vec s,
 \qquad
 \norm{\vec s}^2+\norm{\vec y}^2=1.
\]
\end{lemma}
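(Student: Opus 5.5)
The plan is to derive everything from two exact algebraic facts: \((\vec I+\vec U)\vec C=\vec I\), and positive semidefiniteness of the Gram matrix \(\vec G=\vec I+\vec U+\vec U^\T\). No error analysis is needed.

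\textbf{Structure of \(\vec K\) and the isometry defect.} First, \(\vec U\) is strictly upper triangular, so it is nilpotent. Hence \(\vec C=(\vec I+\vec U)^{-1}=\sum_{i=0}^{k-1}(-\vec U)^i\) is unit upper triangular, \(\vec K=\vec I-\vec C\) is strictly upper triangular, and \(\vec K^k=\vec0\).

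Next I will compute \(\vec C^\T\vec G\vec C\). I rewrite \(\vec G=(\vec I+\vec U)+(\vec I+\vec U)^\T-\vec I\) and use \((\vec I+\vec U)\vec C=\vec I\), which gives
\[
 (\bar{\vec V}\vec C)^\T(\bar{\vec V}\vec C)=\vec C^\T\vec G\vec C=\vec C+\vec C^\T-\vec C^\T\vec C .
\]
Substituting \(\vec C=\vec I-\vec K\) and expanding, the terms \(\vec K\) and \(\vec K^\T\) cancel, leaving \(\vec I-\vec K^\T\vec K\). This is the first identity. Since the left side is a Gram matrix, \(\vec I-\vec K^\T\vec K\succeq\vec0\), which gives \(\norm{\vec K}\le1\), and then \(\norm{\vec C}\le1+\norm{\vec K}\le2\).

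\textbf{The cross term.} Using \(\vec s=\vec C\vec a\) and \(\bar{\vec V}^\T\bar{\vec v}_{k+1}=\vec a\),
\[
 (\bar{\vec V}\vec C)^\T\vec y=\vec C^\T\vec a-\vec C^\T\vec G\vec C\vec a=(\vec I-\vec K^\T)\vec a-(\vec I-\vec K^\T\vec K)\vec a=-\vec K^\T(\vec I-\vec K)\vec a=-\vec K^\T\vec s .
\]

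\textbf{The norm identity.} Since \(\bar{\vec v}_{k+1}\) is a unit vector,
\[
 \norm{\vec y}^2=1-2\vec s^\T\vec a+\vec s^\T\vec G\vec s .
\]
Here \(\vec s^\T\vec G\vec s=\vec a^\T(\vec I-\vec K^\T\vec K)\vec a=\norm{\vec a}^2-\norm{\vec K\vec a}^2\), and \(\vec s^\T\vec a=\norm{\vec a}^2-\vec a^\T\vec K\vec a\). Also \(\norm{\vec s}^2=\norm{\vec a-\vec K\vec a}^2=\norm{\vec a}^2-2\vec a^\T\vec K\vec a+\norm{\vec K\vec a}^2\). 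Adding these, every term cancels except \(1\).

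The only step requiring an idea is the first identity, specifically the rewriting of \(\vec G\) through \(\vec I+\vec U\). Everything afterwards is bookkeeping, and the norm bound on \(\vec K\) comes for free from positive semidefiniteness, with no assumption on the conditioning of \(\bar{\vec V}\).
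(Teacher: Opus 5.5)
Your proposal is correct and follows essentially the same route as the paper: rewrite \(\vec G\) through \(\vec C^{-1}=\vec I+\vec U\) to get \(\vec C^\T\vec G\vec C=\vec I-\vec K^\T\vec K\), read off \(\norm{\vec K}\le1\) from positive semidefiniteness of the Gram matrix, and obtain the cross-term and norm identities by direct expansion. The differences are cosmetic: you use a Neumann series where the paper uses \(\vec K=\vec U\vec C\) for triangularity, and you expand the norm identity in \(\vec K\) where the paper uses \(\vec C^\T(\vec G+\vec I)\vec C=\vec C+\vec C^\T\).
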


\begin{proof}
Since \(\vec K=\vec U\vec C=\vec C\vec U\), it is strictly upper triangular, and hence \(\vec K^k=\vec0\).
Since \(\vec U=\vec C^{-1}-\vec I\), the Gram decomposition in \cref{eq:normalized-gram} reads
\[
 \vec G=\vec I+\vec U+\vec U^\T=\vec C^{-1}+\vec C^{-\T}-\vec I.
\]
Multiplying by \(\vec C^\T\) on the left and \(\vec C\) on the right, the inverses cancel term by term:
\[
 \vec C^\T\vec G\vec C
 =\vec C^\T+\vec C-\vec C^\T\vec C.
\]
Expanding \(\vec K^\T\vec K=(\vec I-\vec C)^\T(\vec I-\vec C)=\vec I-\vec C-\vec C^\T+\vec C^\T\vec C\) identifies the right side, so
\begin{equation}
 \vec C^\T\vec G\vec C
 =\vec I-\vec K^\T\vec K.
 \label{eq:CGC-identity}
\end{equation}
Since \((\bar{\vec V}\vec C)^\T(\bar{\vec V}\vec C)=\vec C^\T\vec G\vec C\), this is the first identity of the lemma.
Since the Gram matrix \(\vec G\) is positive semidefinite, \(\vec K^\T\vec K=\vec I-\vec C^\T\vec G\vec C\preceq\vec I\), so \(\norm{\vec K}\le1\) and \(\norm{\vec C}=\norm{\vec I-\vec K}\le2\); no such a priori bound holds for \(\vec U\) itself.

For the second identity,
\[
 \vec K^\T\vec s+\vec C^\T \bar{\vec V}^\T\vec y
 =\vec K^\T\vec s+\vec C^\T\vec a-\vec C^\T \vec G \vec C\vec a=\vec0,
\]
where the last equality uses \cref{eq:CGC-identity} and \(\vec C+\vec K=\vec I\).
Finally,
\[
 \norm{\vec y}^2+\norm{\vec s}^2
 =1-2\vec a^\T\vec C\vec a
   +\vec a^\T\vec C^\T(\vec G+\vec I)\vec C\vec a
 =1
\]
because \(\vec C^\T(\vec G+\vec I)\vec C=\vec C+\vec C^\T\).
\end{proof}

With the three steps in place, the dilation is assembled by stacking powers of \(\vec K\).

\begin{lemma}[Physical dilation]
\label{lem:dilation}
On \(k\) copies of \(\R^n\), let \(\widehat{\vec A}=\vec I_k\otimes \vec A\), and write \(\col(\cdots)\) for the map that stacks its arguments into a block column.
There exist \(\widehat{\vec V}\in\R^{nk\times k}\), \(\widehat{\vec v}\in\R^{nk}\), and \(\widehat{\vec E}\in\R^{nk\times k}\) such that, with \(\bar\beta_k\ge0\) as above,
\begin{align*}
 \widehat{\vec V}^\T \widehat{\vec V}&=\vec I,
 &\widehat{\vec V}^\T\widehat{\vec v}&=\vec0,
 &\norm{\widehat{\vec v}}&\le1,
 \\
 \widehat{\vec V}\vec e_1
 &=\col\bigl(\vec v_1/\norm{\vec v_1},\vec0,\ldots,\vec0\bigr),
 \\
 \widehat{\vec A}\widehat{\vec V}
 &=\widehat{\vec V}\vec T_k+\bar\beta_k\widehat{\vec v}\vec e_k^\T+\widehat{\vec E},
 &\norm{\widehat{\vec E}}&\lesssim_k\eps\norm{\vec A}.
\end{align*}
\end{lemma}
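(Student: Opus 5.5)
The plan is to stack powers of the nilpotent matrix \(\vec K\) from \cref{lem:isometry}. Concretely, I would set
\[
 \widehat{\vec V}:=\col\bigl(\bar{\vec V}\vec C,\ \bar{\vec V}\vec C\vec K,\ \ldots,\ \bar{\vec V}\vec C\vec K^{k-1}\bigr).
\]
The first identity of \cref{lem:isometry} then makes the Gram matrix telescope:
\[
 \widehat{\vec V}^\T\widehat{\vec V}
 =\sum_{i=0}^{k-1}(\vec K^i)^\T\bigl(\vec I-\vec K^\T\vec K\bigr)\vec K^i
 =\vec I-(\vec K^k)^\T\vec K^k
 =\vec I,
\]
using \(\vec K^k=\vec0\). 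For the first column, \(\vec C\) is unit upper triangular, so \(\vec C\vec e_1=\vec e_1\). Also \(\vec K\vec e_1=\vec0\), since \(\vec K\) is strictly upper triangular. Hence \(\widehat{\vec V}\vec e_1=\col(\bar{\vec v}_1,\vec0,\ldots,\vec0)\), as required.

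Next I would derive the recurrence one block at a time. From \(\vec C=(\vec I+\vec U)^{-1}\) one gets \(\vec T_k\vec C-\vec C\vec T_k=\vec C(\vec U\vec T_k-\vec T_k\vec U)\vec C\). Writing \(\vec U\vec T_k-\vec T_k\vec U=-\vec M-\bar\beta_k\vec a\vec e_k^\T\), with \(\vec M\) the matrix of \cref{lem:normalized-commutator}, and noting \(\vec e_k^\T\vec C=\vec e_k^\T\) (the last row of a unit upper triangular inverse), this becomes
\[
 \vec T_k\vec C-\vec C\vec T_k=-\vec C\vec M\vec C-\bar\beta_k\vec s\vec e_k^\T .
\]
Multiplying \cref{eq:normalized-interface} on the right by \(\vec C\) then gives
\[
 \vec A\bar{\vec V}\vec C=\bar{\vec V}\vec C\vec T_k+\bar\beta_k\vec y\vec e_k^\T+\bigl(\vec E\vec C-\bar{\vec V}\vec C\vec M\vec C\bigr).
\]
This handles block \(0\), whose boundary component is \(\vec y\).

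For block \(i\ge1\) I would use \(\vec K=\vec I-\vec C\), so that \([\vec T_k,\vec K]=-[\vec T_k,\vec C]=\vec C\vec M\vec C+\bar\beta_k\vec s\vec e_k^\T\). Since \(\vec e_k^\T\vec K=\vec e_k^\T-\vec e_k^\T\vec C=\vec0\), we have \(\vec e_k^\T\vec K^m=\vec0\) for every \(m\ge1\). Expanding \([\vec T_k,\vec K^m]\) as a telescoping sum of terms \(\vec K^j[\vec T_k,\vec K]\vec K^{m-1-j}\), the \(\vec e_k^\T\) part therefore survives only in the term with \(\vec K^{m-1}\) on the left. Applying this to \(\vec C\vec K^i=\vec K^i-\vec K^{i+1}\) should give
\[
 \vec T_k\vec C\vec K^i=\vec C\vec K^i\vec T_k+(\text{terms in }\vec M)+\bar\beta_k\,\vec w_i\,\vec e_k^\T
\]
for an explicit vector \(\vec w_i\) built from \(\vec K^{i-1}\vec s\) and \(\vec K^i\vec s\). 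The contribution \(\bar\beta_k\bar{\vec v}_{k+1}\vec e_k^\T\vec C\vec K^i\) vanishes for \(i\ge1\). So block \(i\) has boundary component \(-\bar{\vec V}\vec w_i\), or more likely, after regrouping, something of the form \(\pm\bar{\vec V}\vec C\vec K^{i-1}\vec s\). I would take \(\widehat{\vec v}\) to be the stack of these boundary components, starting with \(\vec y\).

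The \(\vec M\)-terms collect into \(\widehat{\vec E}\), together with the \(\vec E\)-terms \(\vec E\vec C\vec K^i\). These are at most \(k^2\) products of factors \(\bar{\vec V},\vec C,\vec K\) of norm \(\le\sqrt k\), \(2\), \(1\) respectively, each containing one factor \(\vec M\) or \(\vec E\). Hence \(\norm{\widehat{\vec E}}\lesssim_k\eps\norm{\vec A}\) by \cref{lem:normalized-errors,lem:normalized-commutator}.

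The main obstacle is getting the boundary vector exactly right, with correct signs and indices, so that \(\widehat{\vec V}^\T\widehat{\vec v}=\vec0\) and \(\norm{\widehat{\vec v}}\le1\) hold exactly. I expect both to telescope using the remaining identities of \cref{lem:isometry}. For orthogonality, block \(0\) contributes \((\bar{\vec V}\vec C)^\T\vec y=-\vec K^\T\vec s\). The later blocks contribute sums of terms \((\vec K^i)^\T(\vec I-\vec K^\T\vec K)\vec K^{i-1}\vec s\), which should cancel this against \(\vec K^k=\vec0\). For the norm, block \(0\) gives \(\norm{\vec y}^2\), and the later blocks give \(\sum_i \vec s^\T(\vec K^{i-1})^\T(\vec I-\vec K^\T\vec K)\vec K^{i-1}\vec s=\norm{\vec s}^2\). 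Together these give \(\norm{\widehat{\vec v}}^2=\norm{\vec y}^2+\norm{\vec s}^2=1\), which is at most \(1\). If the natural boundary vector instead involves \(\vec K^i\vec s\) terms, I would absorb the mismatch either into a redefinition that keeps \(\widehat{\vec v}\) orthogonal, or into \(\widehat{\vec E}\) whenever that mismatch is \(\bar\beta_k\)-weighted by a small quantity. I would first try to make the algebra exact.
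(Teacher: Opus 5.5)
Your proposal is correct and is essentially the paper's proof: the same stacked \(\widehat{\vec V}\), the same boundary vector \(\widehat{\vec v}=\col(\vec y,\bar{\vec V}\vec C\vec s,\ldots,\bar{\vec V}\vec C\vec K^{k-2}\vec s)\), and the same telescoping through \cref{lem:isometry}. The boundary sign is \(+\): your own formula for \([\vec T_k,\vec K]\) forces it, and your orthogonality check already assumes it. Your direct expansion of \([\vec T_k,\vec K^m]\) is just the unrolled form of the paper's induction, which right-multiplies the block-\(r\) relation by \(\vec K\) and applies \(\vec T_k\vec K=\vec K\vec T_k+\bar\beta_k\vec s\vec e_k^\T+\vec C\vec M\vec C\).

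One slip: only \(k-1\) blocks follow \(\vec y\), so the norm telescopes to \(\norm{\widehat{\vec v}}^2=1-\norm{\vec K^{k-1}\vec s}^2\) rather than \(1\). The \((1,k)\) entry of \(\vec K^{k-1}\) and \(\vec e_k^\T\vec s=\bar{\vec v}_k^\T\bar{\vec v}_{k+1}\) need not vanish. This still gives the required \(\norm{\widehat{\vec v}}\le1\), and no fallback absorption into \(\widehat{\vec E}\) is needed.
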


\begin{proof}
Define
\begin{align*}
 \widehat{\vec V}&=\col(\bar{\vec V}\vec C,\bar{\vec V}\vec C\vec K,\ldots,
                    \bar{\vec V}\vec C\vec K^{k-1}),
 \\
 \widehat{\vec v}&=\col(\vec y,\bar{\vec V}\vec C\vec s,
                  \bar{\vec V}\vec C\vec K\vec s,\ldots,
                  \bar{\vec V}\vec C\vec K^{k-2}\vec s),
\end{align*}
with \(\widehat{\vec v}=\vec y\) when \(k=1\).
Since block columns multiply block by block, the identities of \cref{lem:isometry} turn every inner product involving \(\widehat{\vec V}\) and \(\widehat{\vec v}\) into a telescoping sum.
First,
\begin{align*}
   \widehat{\vec V}^\T\widehat{\vec V}
 &=\sum_{r=0}^{k-1}(\vec K^r)^\T(\vec I-\vec K^\T\vec K)\vec K^r
 \\&=\sum_{r=0}^{k-1}\Bigl((\vec K^r)^\T\vec K^r-(\vec K^{r+1})^\T\vec K^{r+1}\Bigr)
 =\vec I-(\vec K^k)^\T\vec K^k
 =\vec I,
\end{align*}
using \(\vec K^k=\vec0\) in the last step.
Similarly,
\begin{align*}
   \widehat{\vec V}^\T\widehat{\vec v}
 &=(\bar{\vec V}\vec C)^\T\vec y
 +\sum_{r=1}^{k-1}(\vec K^r)^\T(\vec I-\vec K^\T\vec K)\vec K^{r-1}\vec s
 \\&=-\vec K^\T\vec s
 +\bigl(\vec K^\T\vec s-(\vec K^k)^\T\vec K^{k-1}\vec s\bigr)
 =\vec0,
\end{align*}
and
\begin{align*}
   \norm{\widehat{\vec v}}^2
 &=\norm{\vec y}^2
 +\sum_{r=0}^{k-2}\Bigl(\norm{\vec K^r\vec s}^2-\norm{\vec K^{r+1}\vec s}^2\Bigr)
 \\&=\norm{\vec y}^2+\norm{\vec s}^2-\norm{\vec K^{k-1}\vec s}^2
 =1-\norm{\vec K^{k-1}\vec s}^2
 \le1.
\end{align*}
Also \(\vec C\vec e_1=\vec e_1\) and \(\vec K\vec e_1=\vec0\), so \(\widehat{\vec V}\vec e_1=\col(\bar{\vec v}_1,\vec0,\ldots,\vec0)\), proving the second equation of the lemma.

It remains to propagate the recurrence.
Write \(\widehat{\vec V}_r:=\bar{\vec V}\vec C\vec K^r\) and \(\widehat{\vec v}_r\) for the \(r\)th blocks of \(\widehat{\vec V}\) and \(\widehat{\vec v}\), indexed from \(r=0\), so that \(\widehat{\vec v}_0=\vec y\) and \(\widehat{\vec v}_r=\widehat{\vec V}_{r-1}\vec s\) for \(r\ge1\).
Put \(\vec M:=\vec T_k\vec U-\vec U\vec T_k-\bar\beta_k\vec a\vec e_k^\T\) and
\begin{equation}
 \vec L:=\vec C\vec M\vec C
 =\vec T_k\vec K-\vec K\vec T_k-\bar\beta_k\vec s\vec e_k^\T,
 \qquad
 \norm{\vec L}\lesssim_k\eps\norm{\vec A},
 \label{eq:L-commutator}
\end{equation}
where the norm bound combines \(\norm{\vec M}\lesssim_k\eps\norm{\vec A}\) from \cref{lem:normalized-commutator} with \(\norm{\vec C}\le2\) from \cref{lem:isometry}.
Rearranged, \cref{eq:L-commutator} is an exchange rule: moving \(\vec K\) leftward past \(\vec T_k\) costs a boundary term and a small residual,
\[
 \vec T_k\vec K=\vec K\vec T_k+\bar\beta_k\vec s\vec e_k^\T+\vec L.
\]
This is the only commuting we need.
We claim that, for \(0\le r<k\),
\begin{equation}
 \vec A\widehat{\vec V}_r
 =\widehat{\vec V}_r\vec T_k+\bar\beta_k\,\widehat{\vec v}_r\vec e_k^\T+\vec E_r,
 \qquad
 \norm{\vec E_r}\lesssim_k\eps\norm{\vec A};
 \label{eq:block-recurrence}
\end{equation}
stacking these \(k\) block rows gives the third equation of the lemma, with \(\norm{\widehat{\vec E}}\le\sqrt{k}\,\max_r\norm{\vec E_r}\lesssim_k\eps\norm{\vec A}\).

The claim follows by induction on \(r\).
For \(r=0\), right multiply \cref{eq:normalized-interface} by \(\vec C\): since \(\vec e_k^\T\vec C=\vec e_k^\T\), and \(\vec T_k\vec C=\vec C\vec T_k-\bar\beta_k\vec s\vec e_k^\T-\vec L\) by the exchange rule and \(\vec C=\vec I-\vec K\),
\[
 \vec A\bar{\vec V}\vec C
 =\bar{\vec V}\vec C\vec T_k+\bar\beta_k(\bar{\vec v}_{k+1}-\bar{\vec V}\vec s)\vec e_k^\T+\vec E_0,
 \qquad
 \vec E_0=\vec E\vec C-\bar{\vec V}\vec L,
\]
and \(\bar{\vec v}_{k+1}-\bar{\vec V}\vec s=\vec y=\widehat{\vec v}_0\), while \(\norm{\vec E_0}\le2\norm{\vec E}+\sqrt{k}\,\norm{\vec L}\lesssim_k\eps\norm{\vec A}\) by \cref{lem:normalized-errors}.
For the step from \(r\) to \(r+1\), right multiply \cref{eq:block-recurrence} by \(\vec K\).
The boundary term dies because \(\vec e_k^\T\vec K=\vec0\) (the last row of a strictly upper triangular matrix is zero), and the exchange rule moves the trailing \(\vec K\) past \(\vec T_k\), recreating the next boundary:
\[
 \vec A\widehat{\vec V}_{r+1}
 =\widehat{\vec V}_r\vec T_k\vec K+\vec E_r\vec K
 =\widehat{\vec V}_{r+1}\vec T_k
 +\bar\beta_k\,(\widehat{\vec V}_r\vec s)\vec e_k^\T
 +\vec E_r\vec K+\widehat{\vec V}_r\vec L,
\]
which is \cref{eq:block-recurrence} for \(r+1\), with \(\widehat{\vec V}_r\vec s=\widehat{\vec v}_{r+1}\) and \(\vec E_{r+1}=\vec E_r\vec K+\widehat{\vec V}_r\vec L\).
Since \(\norm{\vec K}\le1\) and \(\norm{\widehat{\vec V}_r}\le\norm{\bar{\vec V}}\norm{\vec C}\norm{\vec K}^r\le2\sqrt{k}\), each step grows the residual by at most \(2\sqrt{k}\,\norm{\vec L}\), so \(\norm{\vec E_r}\le\norm{\vec E_0}+2r\sqrt{k}\,\norm{\vec L}\lesssim_k\eps\norm{\vec A}\).
\end{proof}

\subsection{Symmetric completion and the backward theorem}

\begin{lemma}[Completion of an approximate initial Lanczos block]
\label{lem:completion}
Let \(\widehat{\vec V}\), \(\widehat{\vec v}\), and \(\widehat{\vec E}\) be as in \cref{lem:dilation}, and put \(N=nk\).
There are an orthogonal matrix \(\widehat{\vec U}\in\R^{N\times N}\) whose first \(k\) columns are the columns of \(\widehat{\vec V}\), and a symmetric tridiagonal matrix \(\widetilde{\vec T}\in\R^{N\times N}\), such that
\[
 (\widetilde{\vec T})_{1:k,1:k}=\vec T_k,
 \qquad
 \norm{\widehat{\vec U}^\T \widehat{\vec A}\widehat{\vec U}-\widetilde{\vec T}}\le3\norm{\widehat{\vec E}}.
\]
The matrix \(\widetilde{\vec T}\) may be reducible only after its leading \(k\times k\) block.
\end{lemma}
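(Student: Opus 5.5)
The plan is to build \(\widehat{\vec U}\) in two stages. First complete the orthonormal columns of \(\widehat{\vec V}\) so that the next column points along \(\widehat{\vec v}\). Then tridiagonalize the trailing block exactly by an orthogonal transformation that fixes that first trailing column. Everything here is exact linear algebra; the only inputs are the three relations of \cref{lem:dilation}: \(\widehat{\vec V}^\T\widehat{\vec V}=\vec I\), \(\widehat{\vec V}^\T\widehat{\vec v}=\vec0\), and the perturbed recurrence with residual \(\widehat{\vec E}\).

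\emph{Step 1 (completion).} If \(\bar\beta_k\widehat{\vec v}\ne\vec0\), put \(\vec w_1:=\widehat{\vec v}/\norm{\widehat{\vec v}}\). This vector is orthogonal to \(\range(\widehat{\vec V})\) by \cref{lem:dilation}. Extend \([\widehat{\vec V},\vec w_1]\) to an orthogonal matrix \([\widehat{\vec V},\vec W]\), where \(\vec W\vec e_1=\vec w_1\). If \(\bar\beta_k\widehat{\vec v}=\vec0\), take any orthonormal completion \(\vec W\).

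Premultiplying the recurrence of \cref{lem:dilation} by \(\widehat{\vec V}^\T\) and by \(\vec W^\T\) gives
\[
 \widehat{\vec V}^\T\widehat{\vec A}\widehat{\vec V}=\vec T_k+\widehat{\vec V}^\T\widehat{\vec E},
 \qquad
 \vec W^\T\widehat{\vec A}\widehat{\vec V}=\gamma\,\vec e_1\vec e_k^\T+\vec W^\T\widehat{\vec E},
 \qquad
 \gamma:=\bar\beta_k\norm{\widehat{\vec v}}\ge0.
\]
Here \(\vec W^\T\widehat{\vec v}=\norm{\widehat{\vec v}}\vec e_1\). The first identity shows \(\widehat{\vec V}^\T\widehat{\vec E}\) is symmetric, being a difference of symmetric matrices.

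\emph{Step 2 (trailing tridiagonalization).} The trailing block \(\vec B:=\vec W^\T\widehat{\vec A}\vec W\) is symmetric. Apply Householder tridiagonalization to \(\vec B\) using reflectors that act only on coordinates \(2,\ldots,N-k\). This gives an orthogonal \(\vec Q\) with \(\vec Q\vec e_1=\vec e_1\) and \(\vec Q^\T\vec B\vec Q=\vec S\) symmetric tridiagonal. Set \(\widehat{\vec U}:=[\widehat{\vec V},\vec W\vec Q]\); its first \(k\) columns are those of \(\widehat{\vec V}\).

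Define \(\widetilde{\vec T}\) blockwise:
\begin{itemize}
 \item leading block \(\vec T_k\);
 \item trailing block \(\vec S\);
 \item the single coupling entry \(\gamma\) in positions \((k,k+1)\) and \((k+1,k)\).
\end{itemize}
This matrix is symmetric tridiagonal. Its internal subdiagonals within \(\vec T_k\) are the positive \(\beta_j\), so any zero off-diagonal entry (\(\gamma\), or a subdiagonal of \(\vec S\)) lies after the leading \(k\times k\) block, as claimed.

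\emph{Step 3 (error bound).} Since \(\vec Q^\T\vec e_1=\vec e_1\), the off-diagonal block is \(\vec Q^\T\vec W^\T\widehat{\vec A}\widehat{\vec V}=\gamma\vec e_1\vec e_k^\T+\vec Q^\T\vec W^\T\widehat{\vec E}\). The trailing block of \(\widehat{\vec U}^\T\widehat{\vec A}\widehat{\vec U}\) equals \(\vec S\) exactly. Hence the difference \(\widehat{\vec U}^\T\widehat{\vec A}\widehat{\vec U}-\widetilde{\vec T}\) is the symmetric block matrix with
\begin{itemize}
 \item diagonal blocks \(\widehat{\vec V}^\T\widehat{\vec E}\) and \(\vec0\);
 \item off-diagonal blocks \(\vec Q^\T\vec W^\T\widehat{\vec E}\) and its transpose.
\end{itemize}
Each block has norm at most \(\norm{\widehat{\vec E}}\), since \(\widehat{\vec V}\) and \(\vec W\vec Q\) have orthonormal columns. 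Splitting the matrix into its block-diagonal part and its block-off-diagonal part gives a bound of \(2\norm{\widehat{\vec E}}\le3\norm{\widehat{\vec E}}\).

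\emph{Main obstacle.} The delicate point is ensuring the coupling between the leading block and the trailing part is confined to the single entry \((k+1,k)\), so that \(\widetilde{\vec T}\) is genuinely tridiagonal with leading block exactly \(\vec T_k\). This requires both that \(\vec w_1\) is chosen along \(\widehat{\vec v}\) and that \(\vec Q\) fixes \(\vec e_1\). It also requires keeping the nonsymmetric-looking error terms \(\widehat{\vec V}^\T\widehat{\vec E}\) and \(\vec W^\T\widehat{\vec E}\) out of \(\widetilde{\vec T}\), placing them entirely in the difference. The degenerate case \(\gamma=0\) is handled by the arbitrary completion.
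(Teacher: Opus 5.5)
Your proof is correct, and it is essentially the paper's construction in different packaging. The paper first writes down an explicit symmetric correction \(\vec\Delta=-\widehat{\vec E}\widehat{\vec V}^\T-\widehat{\vec V}\widehat{\vec E}^\T+\widehat{\vec V}(\widehat{\vec V}^\T\widehat{\vec E})\widehat{\vec V}^\T\) with \(\vec\Delta\widehat{\vec V}=-\widehat{\vec E}\), so that \(\widehat{\vec A}+\vec\Delta\) satisfies the Lanczos recurrence exactly. It then continues exact Lanczos on \(\widehat{\vec A}+\vec\Delta\), splitting off invariant complements at any breakdown. It sets \(\widetilde{\vec T}=\widehat{\vec U}^\T(\widehat{\vec A}+\vec\Delta)\widehat{\vec U}\) and bounds the difference by the triangle inequality, \(\norm{\vec\Delta}\le3\norm{\widehat{\vec E}}\).

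You instead fix the orthogonal completion first, tridiagonalize the trailing block with Householder reflectors that fix \(\vec e_1\), and define \(\widetilde{\vec T}\) by discarding the error blocks. With your \(\widehat{\vec U}=[\widehat{\vec V},\vec W\vec Q]\), one checks that \(\widehat{\vec U}^\T\vec\Delta\widehat{\vec U}\) is exactly the negative of your block difference. So your \(\widetilde{\vec T}\) equals \(\widehat{\vec U}^\T(\widehat{\vec A}+\vec\Delta)\widehat{\vec U}\), and your blockwise estimate shows that the paper's \(\vec\Delta\) in fact satisfies \(\norm{\vec\Delta}\le2\norm{\widehat{\vec E}}\).

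Your route gives the sharper constant and avoids any breakdown case analysis. Householder tridiagonalization never breaks down, and zero subdiagonals in \(\vec S\) are harmless. The paper's route gives an explicit nearby matrix \(\widehat{\vec A}+\vec\Delta\), in the original coordinates, on which exact Lanczos reproduces \(\vec T_k\); that is the object used in the numerical illustration.

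One small wording fix: \(\vec W^\T\widehat{\vec v}=\norm{\widehat{\vec v}}\vec e_1\) holds only in the nondegenerate branch. If \(\bar\beta_k=0\) but \(\widehat{\vec v}\ne\vec0\), an arbitrary completion need not satisfy it. That term carries the factor \(\bar\beta_k=0\), however, so your identity \(\vec W^\T\widehat{\vec A}\widehat{\vec V}=\gamma\vec e_1\vec e_k^\T+\vec W^\T\widehat{\vec E}\) with \(\gamma=0\) still holds.
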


\begin{proof}
From the first and third equations of \cref{lem:dilation}, \(\widehat{\vec V}^\T\widehat{\vec E}=\widehat{\vec V}^\T\widehat{\vec A}\widehat{\vec V}-\vec T_k\), which is symmetric.
Hence
\begin{equation}
 \vec\Delta=-\widehat{\vec E}\widehat{\vec V}^\T-\widehat{\vec V}\widehat{\vec E}^\T
 +\widehat{\vec V}(\widehat{\vec V}^\T\widehat{\vec E})\widehat{\vec V}^\T
 \label{eq:symmetric-correction}
\end{equation}
is symmetric, satisfies \(\vec\Delta\widehat{\vec V}=-\widehat{\vec E}\), and obeys \(\norm{\vec\Delta}\le3\norm{\widehat{\vec E}}\).
Thus \(\vec B=\widehat{\vec A}+\vec\Delta\) satisfies the exact recurrence
\[
 \vec B\widehat{\vec V}=\widehat{\vec V}\vec T_k+\bar\beta_k\widehat{\vec v}\vec e_k^\T.
\]
If \(\bar\beta_k\norm{\widehat{\vec v}}>0\), normalize \(\widehat{\vec v}\) and absorb its norm into the next subdiagonal; if \(\bar\beta_k\norm{\widehat{\vec v}}=0\), \(\range(\widehat{\vec V})\) is invariant.
Continue exact Lanczos for \(\vec B\).
At any later breakdown, symmetry makes the orthogonal complement of the generated invariant subspace invariant, so tridiagonalize that complement independently.
This gives an orthogonal \(\widehat{\vec U}\), with first \(k\) columns \(\widehat{\vec V}\), for which \(\widetilde{\vec T}=\widehat{\vec U}^\T \vec B\widehat{\vec U}\) is tridiagonal and has leading block \(\vec T_k\).
Finally,
\[
 \norm{\widehat{\vec U}^\T \widehat{\vec A}\widehat{\vec U}-\widetilde{\vec T}}
 =\norm{\vec\Delta}\le3\norm{\widehat{\vec E}}.\qedhere
\]
\end{proof}

\begin{theorem}[Greenbaum-type backward model]
\label{thm:greenbaum}
Suppose \cref{alg:lanczos} completes \(k\) iterations under the assumptions of \cref{sec:model}.
Put \(N=nk\).
There exist symmetric \(N\times N\) matrices \(\vec S\) and \(\widetilde{\vec T}\) such that
\begin{enumerate}[label=\textup{(\roman*)}]
 \item \(\widetilde{\vec T}\) is tridiagonal and its leading \(k\times k\)
       principal block is exactly the computed \(\vec T_k\);
 \item \(\vec S=\widehat{\vec U}^\T(\vec I_k\otimes \vec A)\widehat{\vec U}\) for an orthogonal matrix \(\widehat{\vec U}\) satisfying \(\widehat{\vec U}\vec e_1=\col\bigl(\vec v_1/\norm{\vec v_1},\vec0,\ldots,\vec0\bigr)\);
 \item \(\norm{\vec S-\widetilde{\vec T}}\lesssim_k\eps\norm{\vec A}\).
\end{enumerate}
Consequently, exact \(k\)-step Lanczos applied to \((\widetilde{\vec T},\vec e_1)\) returns the computed matrix \(\vec T_k\) exactly.
\end{theorem}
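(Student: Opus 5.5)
The theorem is essentially an assembly of \cref{lem:dilation} and \cref{lem:completion}, so the plan is to chain them and then read off the final Lanczos claim from tridiagonal structure.

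First, apply \cref{lem:dilation} to the run. This produces \(\widehat{\vec V}\) with orthonormal columns, \(\widehat{\vec V}\vec e_1=\col(\vec v_1/\norm{\vec v_1},\vec0,\ldots,\vec0)\), a vector \(\widehat{\vec v}\), and a residual \(\widehat{\vec E}\) with \(\norm{\widehat{\vec E}}\lesssim_k\eps\norm{\vec A}\). Next, feed these into \cref{lem:completion}. It gives an orthogonal \(\widehat{\vec U}\) whose first \(k\) columns are those of \(\widehat{\vec V}\), and a symmetric tridiagonal \(\widetilde{\vec T}\) with leading block \(\vec T_k\).

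Now define \(\vec S:=\widehat{\vec U}^\T(\vec I_k\otimes\vec A)\widehat{\vec U}\). It is symmetric because \(\vec A\) is. Item (ii) holds because \(\widehat{\vec U}\vec e_1=\widehat{\vec V}\vec e_1\). Item (i) is the conclusion of \cref{lem:completion}. Item (iii) follows from \(\norm{\vec S-\widetilde{\vec T}}\le3\norm{\widehat{\vec E}}\lesssim_k\eps\norm{\vec A}\), where the factor \(3\) is absorbed into the convention.

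For the consequence, I would argue directly from the structure of \(\widetilde{\vec T}\). It is tridiagonal, and its first \(k-1\) subdiagonal entries are \(\beta_1,\ldots,\beta_{k-1}>0\), since these come from \(\vec T_k\) and the run assumes them positive. Exact Lanczos on \((\widetilde{\vec T},\vec e_1)\) then generates \(\vec e_1,\ldots,\vec e_k\) with coefficients equal to the diagonal and subdiagonal entries. This is proved by induction on \(j<k\): the vector \(\widetilde{\vec T}\vec e_j-\beta_{j-1}\vec e_{j-1}-\alpha_j\vec e_j=\beta_j\vec e_{j+1}\) is nonzero, so normalizing it yields \(\vec e_{j+1}\) and \(\beta_j\). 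Step \(k\) produces \(\alpha_k\). The tridiagonal matrix returned after \(k\) steps is therefore exactly the leading block, namely \(\vec T_k\). Whatever follows the block (the entry \(\widetilde{\vec T}_{k+1,k}\), or reducibility beyond position \(k\)) only affects step \(k+1\) and beyond.

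I expect the main subtlety to be this breakdown issue. Lanczos must not terminate before step \(k\), which is exactly why \cref{lem:completion} notes that reducibility can occur only after the leading block. The edge case \(k=1\) is trivial, since only \(\alpha_1\) is produced.
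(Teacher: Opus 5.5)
Your proposal is correct and follows essentially the same route as the paper: apply \cref{lem:dilation}, then \cref{lem:completion}, set \(\vec S=\widehat{\vec U}^\T(\vec I_k\otimes\vec A)\widehat{\vec U}\), and read off (i)--(iii) from those two lemmas. Your explicit induction for the final claim, which uses positivity of \(\beta_1,\ldots,\beta_{k-1}\) to rule out early termination, simply spells out the paper's one-line closing remark.
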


\begin{proof}
Apply \cref{lem:dilation} and then \cref{lem:completion}.
Let \(\widehat{\vec U}\) be the resulting orthogonal matrix and set
\[
 \vec S=\widehat{\vec U}^\T\widehat{\vec A}\widehat{\vec U}.
\]
The completion bound and the residual bound in \cref{lem:dilation} prove (iii).
Moreover, \(\widehat{\vec U}\vec e_1=\widehat{\vec V}\vec e_1 =\col(\bar{\vec v}_1,\vec0,\ldots,\vec0)\) by the second equation in \cref{lem:dilation}, proving (ii).
Since the run completed \(k\) iterations, the internal subdiagonals \(\beta_1,\ldots,\beta_{k-1}\) of \(\vec T_k\) are positive, so exact Lanczos from \(\vec e_1\) reads the leading block of \(\widetilde{\vec T}\) through step \(k\).
\end{proof}

This is a backward interpretation at the level of the starting spectral measure.
It does not assert the componentwise spectral-weight comparison for every computed Lanczos vector given in \cite[\S8]{greenbaum1989}.
Its advantages are that the perturbation is of the same order \(\eps\norm{\vec A}\) as the local errors and that no inverse subdiagonal, starting weight, or spectral gap enters the construction.
Our backwards matrix, which is nearly the direct sum of $\vec{A}$ with itself many times, is intuitively similar to the models used in \cite{greenbaum_strakos1992} to ``predict'' the behavior of finite precision Lanczos.

\section{A numerical illustration}
\label{sec:numerical}
\begin{figure}[t]
\centering
\includegraphics[scale=.86]{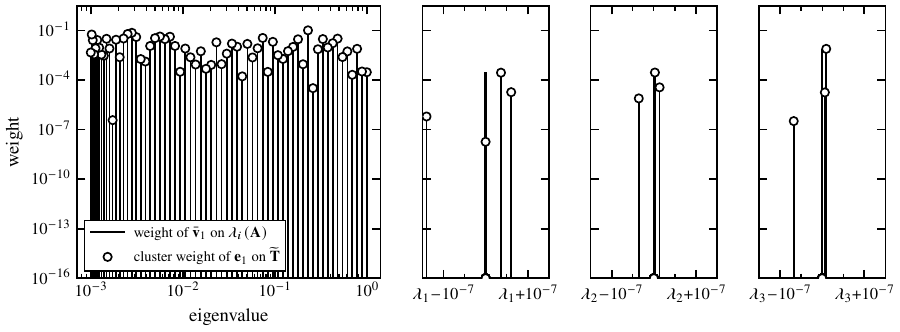}
\caption{Starting-vector weights of the backward model.
Left: the weight of \(\bar{\vec v}_1\) on each eigenvalue of \(\vec A\) (stems) and the total weight of \(\vec e_1\) on the corresponding cluster of eigenvalues of \(\widetilde{\vec T}\) (crosses); the two agree to within \(4\cdot10^{-6}\).
Right: zooms on the three largest eigenvalues, with the original eigenvalue and its weight repeated as a heavier stem: around each, the model spectrum contains a cluster of \(k=30\) eigenvalues within \(\norm{\vec\Delta}\approx3\cdot10^{-7}\) (tick marks at the bottom), over which the original weight is distributed; only a few cluster members carry weight above the plotted floor.}
\label{fig:spectral-measure}
\end{figure}

We illustrate the backward model of \cref{sec:greenbaum} on the run shown in \cref{fig:backward-experiment}.
The matrix \(\vec A\) is the \(n=64\) Strako\v{s} matrix \cite{strakos1991}: diagonal, with eigenvalues (in decreasing order) \(\lambda_i=\lambda_n+\frac{n-i}{n-1}(\lambda_1-\lambda_n)\rho^{i-1}\), where \(\lambda_1=1\), \(\lambda_n=10^{-3}\), and \(\rho=0.9\).
\Cref{alg:lanczos} is run for \(k=30\) iterations in IEEE single precision (\(\eps\approx10^{-7}\)) from a random starting vector, and the backward model is then assembled in double precision, following the construction of \cref{sec:greenbaum} verbatim.
The run loses orthogonality completely, \(\norm{\vec V_k^\T\vec V_k-\vec I}\approx1.1\).
Nevertheless, the dilated residual satisfies \(\norm{\widehat{\vec E}}\approx2\cdot10^{-7}\,\norm{\vec A}\), and the completion of \cref{lem:completion} produces an orthogonal \(\widehat{\vec U}\) and a tridiagonal \(\widetilde{\vec T}=\widehat{\vec U}^\T(\widehat{\vec A}+\vec\Delta)\widehat{\vec U}\) whose leading \(k\times k\) block reproduces the computed \(\vec T_k\) to working double precision, with \(\norm{\widehat{\vec U}^\T\widehat{\vec A}\widehat{\vec U}-\widetilde{\vec T}}=\norm{\vec\Delta}\approx3\cdot10^{-7}\).
Every eigenvalue of \(\widetilde{\vec T}\) therefore lies within \(\norm{\vec\Delta}\) of \(\spec(\vec A)\).
Lanczos in exact arithmetic (double precision with full reorthogonalization) applied to \((\widetilde{\vec T},\vec e_1)\) reproduces every computed coefficient \(\alpha_j\) and \(\beta_j\), while applied to \((\vec A,\bar{\vec v}_1)\) it departs from the computed run at order \(\norm{\vec A}\) as orthogonality is lost.

\Cref{fig:spectral-measure} compares \(\widetilde{\vec T}\) with \(\vec A\) through their starting vectors.
By item (ii) of \cref{thm:greenbaum}, the weights of \(\vec e_1\) in an eigenbasis of \(\widehat{\vec U}^\T\widehat{\vec A}\widehat{\vec U}\) agree exactly with those of \(\bar{\vec v}_1\) in an eigenbasis of \(\vec A\); the figure shows the corresponding comparison for \(\widetilde{\vec T}\).
Each eigenvalue of \(\vec A\) is matched in \(\widetilde{\vec T}\) by a cluster of \(k\) eigenvalues within \(\norm{\vec\Delta}\), and the total weight of \(\vec e_1\) on each cluster reproduces the weight of \(\bar{\vec v}_1\) on the corresponding eigenvalue of \(\vec A\), here to within \(4\cdot10^{-6}\).

\section{Conclusion}

The kernel and normalization model yields the perturbed recurrence, near-normalization, and local products.
These feed both the Paige Ritz conclusions and the physical-dilation proof of the Greenbaum model.
All dependence on implementation accuracy is expressed through \(\eps\); all discarded factors are polynomial only in the iteration count.
The backward construction uses only the local Paige package, while the global Paige theorems independently explain when and how orthogonality is lost and why every computed Ritz value remains meaningful.

\vfill
\section*{AI Usage}

AI tools (Claude Code with Fable 5 / Opus 5 and Codex with GPT-5.6 Sol) were used throughout the preparation of this note. 
The author assumes full responsibility for the content.

\printbibliography

% The Typesetting note in style.tex describes the NBS presentation, not this
% one; it is left out until there is a note to match.
% \input{style}

\end{document}